\newcommand{\Z}{\mathbb{Z}}
\newcommand{\id}[1]{\Id(#1)}
\newtheorem{lemma}{Lemma}[section]
\newtheorem{teo}[lemma]{Theorem}
\newtheorem{thm}[lemma]{Theorem}
\newtheorem*{thm*}{Theorem}
\newtheorem{prop}[lemma]{Proposition}
\newtheorem{oq}[lemma]{Open question}
\newtheorem{coro}[lemma]{Corollary}
\newtheorem{defi}[lemma]{Definition}
\newtheorem{rem}[lemma]{Remark}
\newtheorem{ex}[lemma]{Example}
\newtheorem{nex}[lemma]{Non-example}
\newcommand{\St}{\operatorname{St}_{2n}}
 \DeclareMathOperator{\Id}{Id}
\DeclareMathOperator{\sgn}{sgn}
\def\gp#1{\langle#1\rangle}
\newcommand{\fx}{F\gp{X}}
\begin{document}
\title[PI-Quivers]{Quivers with Polynomial Identities}

\author[G.~Cerulli~Irelli]{Giovanni Cerulli Irelli}
\address{Dipartimento SBAI, Sapienza Universit\`a di Roma, Via Scarpa 10, 00161, Roma, Italy}
\email{giovanni.cerulliirelli@uniroma1.it}

\author[J.~De~Loera~Chávez]{Javier De Loera Chávez}
\address{Dipartimento SBAI, Sapienza Universit\`a di Roma, Via Scarpa 16, 00161, Roma, Italy}
\email{javieralejandro.deloerachavez@uniroma1.it}

\author[E.~Pascucci]{Elena Pascucci}
\address{Dipartimento SBAI, Sapienza Universit\`a di Roma, Via Scarpa 16, 00161, Roma, Italy}
\email{elena.pascucci@uniroma1.it}

\thanks{The authors were supported by “Progetti
di Ateneo” of Sapienza Universit\`a di Roma, by GNSAGA-INDAM, and by NextGenerationEU - PRIN 2022 -B53D23009430006 - 2022S97PMY -PE1-investimento M4.C2.1.1-Structures for Quivers, Algebras and Representations (SQUARE)}

\subjclass{16R10, 16G20}

\keywords{Polynomial Identities, Quivers, Path Algebras}

\begin{abstract} 
We provide a topological characterization of quivers whose path algebra satisfies a polynomial identity. This class includes the oriented cycle and  acyclic quivers and,  in the latter case, we describe the associated T-ideal. We introduce a generalization of Arnold's A-graded algebras, which we call locally A-graded algebras, and prove that they are also PI.
We give an example of a quiver algebra satisfying a polynomial identity, even if the path algebra of the quiver does not.
\end{abstract}

\maketitle
%\tableofcontents

\section{{Introduction}}

An  algebra over a field $F$ is called PI if there exists a non-commutative polynomial whose evaluation in every tuple of its elements is zero. Such a polynomial is then called a polynomial identity for the algebra. For example, if $A$ is commutative then a polynomial identity  for $A$ is $x_{1}x_{2}-x_{2}x_{1}$.  We say that a quiver $Q$ is PI if its path algebra $FQ$ is $PI$. Our first result gives a classification of finite PI-quivers.

\begin{thm}\label{Thm:PIQuiverInto}
A finite quiver $Q$  is $PI$ if and only every vertex of $Q$ lies in at most one oriented cycle.
\end{thm}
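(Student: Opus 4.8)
The plan is to prove the two implications separately, reading the criterion ``every vertex lies on at most one oriented cycle'' as a statement about the strongly connected components (SCCs) of $Q$, and recalling throughout that the paths form a basis of $FQ$.

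For necessity I argue by contrapositive. Suppose a vertex $v$ lies on two distinct simple oriented cycles $C_1,C_2$, and let $p_1,p_2\in e_v(FQ)e_v$ be the closed paths that traverse $C_1,C_2$ once. I would show that the subalgebra $F\langle p_1,p_2\rangle$ is free of rank two; then $FQ$ contains a non-PI subalgebra and hence is not PI, since subalgebras of PI algebras are PI. Because distinct paths are linearly independent basis vectors, freeness reduces to the combinatorial claim that distinct words in $p_1,p_2$ spell distinct paths, i.e. that $\{p_1,p_2\}$ is a code in the free monoid on the arrows. Each $p_i$ is a simple cycle through $v$, so it first returns to $v$ only at its end and is therefore a primitive word, and $p_1\neq p_2$. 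By the two-word code criterion (a consequence of the theorem of Lyndon and Sch\"utzenberger, that $xy=yx$ iff $x,y$ are powers of a common word), two distinct primitive words are never common powers, so they do not commute and generate a free submonoid. Hence $F\langle p_1,p_2\rangle\cong F\langle x_1,x_2\rangle$, which is not PI.

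For sufficiency, assume every vertex lies on at most one simple cycle. The structural heart is the claim that each SCC of $Q$ is either a single vertex or a single oriented cycle. Indeed, two distinct cycles inside one SCC would either already share a vertex, or, being joined in both directions by strong connectivity, would combine into a new cycle meeting one of them; in either case some vertex would sit on two cycles. Since a strongly connected graph whose only simple cycle is $C$ must coincide with $C$, the SCCs are points and pairwise vertex-disjoint cycles, and the condensation of $Q$ is acyclic; fix a topological order $S_1,\dots,S_r$ of them. Now let $N\subseteq FQ$ be the span of all paths using at least one arrow between two different SCCs. This is a two-sided ideal, and since along any path the visited SCCs strictly increase in the chosen order, a single path uses at most $r-1$ inter-SCC arrows; hence any product of $r$ paths from $N$ would require $r$ such arrows and must vanish, so $N^r=0$. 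The quotient $FQ/N$ is spanned by paths that stay inside one SCC and splits as $\prod_i FS_i$, where each $FS_i$ is $F$ or the path algebra of an oriented cycle. The latter is PI because it embeds into $M_{n_i}(F[t])$ (send the $n_i$ arrows to the elementary matrices of a cyclic permutation, with the single wrap-around arrow carrying the factor $t$; distinct paths go to distinct monomials $t^{w}E_{jk}$, so the map is faithful), and $M_{n_i}(F[t])$ satisfies the standard identity $s_{2n_i}$ by Amitsur--Levitzki. A finite direct product of PI algebras is PI, so $FQ/N$ is PI.

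It remains to assemble these: a nilpotent extension of a PI algebra is PI, because if $FQ/N$ satisfies a multilinear identity $f$ of degree $d$, then under any substitution in $FQ$ each value of $f$ lies in $N$, so $f(x_1,\dots,x_d)\,f(x_{d+1},\dots,x_{2d})\cdots$ ($r$ copies on disjoint variables) lands in $N^r=0$ and is a nontrivial identity for $FQ$. This closes the sufficiency direction. I expect the main obstacle to be the structural step, namely turning the local hypothesis ``one cycle per vertex'' into the global statements that every SCC is a single cycle or point and that the condensation is acyclic; once that is in place, the nilpotency of $N$, the PI-ness of the single-cycle algebra, and the closure properties (subalgebra, finite product, nilpotent extension) are routine.
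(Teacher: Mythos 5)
Your necessity argument coincides with the paper's --- a vertex on two cycles yields a free subalgebra on two generators, hence no identities --- except that you actually justify the freeness (primitivity of a simple closed path plus the two-word code criterion), whereas the paper only asserts the isomorphism with $F\langle x_1,x_2\rangle$. Your sufficiency argument is correct but genuinely different from the paper's. The paper proves directly that $St_{2n}$, $n=\lvert Q_0\rvert$, is an identity: a tuple of $2n$ paths with non-zero product is encoded as a Swan quiver, Swan's Theorem~\ref{gra1} pairs off unicursal paths of opposite sign, and the key Lemma~\ref{Lem:KeyLemPiQuiver} (proved by induction using Lemma~\ref{Lemma:DisconnectingArrows} and the locally A-graded property of $FC_n$) shows that all unicursal paths between fixed endpoints represent the same element of $FQ$, so the summands of the standard polynomial cancel in pairs. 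You instead argue structurally: the strongly connected components are points or pairwise disjoint cycles, the ideal $N$ spanned by component-crossing paths satisfies $N^r=0$, the quotient $FQ/N\cong\prod_i FS_i$ is a finite product of cycle algebras --- PI via $FC_{n_i}\hookrightarrow M_{n_i}(F[t])$ and Amitsur--Levitzki (valid over the commutative ring $F[t]$ by multilinearity, a point worth making explicit since Theorem~\ref{Thm:Amitsur-Levitski} is stated over a field; alternatively you could quote Corollary~\ref{Cor:FCn}) --- and a nilpotent extension of a PI algebra is PI. The paper's route buys the sharp identity $St_{2\lvert Q_0\rvert}$ and reuses the machinery of Theorem~\ref{teo bis}; yours is more modular, avoids Swan's theorem entirely, and exposes the ring-theoretic structure (nilpotent ideal over a product of cycle algebras) that underlies the GK-dimension analogy mentioned in the introduction, at the price of an identity of the larger degree $2kr$. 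The one step you should tighten is the structural claim that a strongly connected component containing two distinct simple cycles has a vertex lying on two of them: the clean argument is that every arrow of a strongly connected quiver lies on a simple cycle, so an arrow incident to a cycle $C$ but not belonging to it produces a second simple cycle through a vertex of $C$; your phrase ``combine into a new cycle meeting one of them'' glosses over the extraction of a simple cycle from the combined closed walk.
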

Since subalgebras and quotients of PI algebras are PI, Theorem~\ref{Thm:PIQuiverInto} provides a huge new class of PI algebras. 

The quivers appearing in Theorem~\ref{Thm:PIQuiverInto} are not new in the literature: Pere Ara informed us that  they are precisely the quivers for which the corresponding Leavitt path algebra has finite GK-dimension \cite[Theorem~1.1]{Pere}, \cite{AAJZelmanov}. The class of Leavitt path algebras associated to a finite quiver which are PI are precisely those of GK dimension less or equal than one  \cite[Theorem~4.1]{BLR}. 

In the light of Theorem~\ref{Thm:PIQuiverInto}, an acyclic quiver $Q$ is PI. In this case we can improve considerably the result and give a description of the T-ideal $\id{FQ}$ of polynomial identities for its path algebra, at least in case the field $F$ has characteristic zero.

\begin{thm}\label{Thm:AcyclicIntro}
Let $F$ be a field of characteristic zero and let $Q$ be an acyclic quiver. Then 
\(
\id{FQ}=\langle [x_1,x_2]\cdots [x_{2m-1},x_{2m}] \rangle_T
\)
where $m-1$ is the maximal length of a path in $Q$.
\end{thm}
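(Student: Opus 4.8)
The plan is to identify $\id{FQ}$ with the T-ideal of identities of the algebra $UT_m(F)$ of $m\times m$ upper triangular matrices over $F$, and then to invoke the classical description of the latter. Recall (Mal'cev) that in characteristic zero $\id{UT_m(F)}=\langle [x_1,x_2]\cdots[x_{2m-1},x_{2m}]\rangle_T$; writing $g$ for this product of $m$ commutators, it therefore suffices to establish the two inclusions $\langle g\rangle_T\subseteq\id{FQ}$ and $\id{FQ}\subseteq\id{UT_m(F)}$, after which the chain $\langle g\rangle_T\subseteq\id{FQ}\subseteq\id{UT_m(F)}=\langle g\rangle_T$ forces equality throughout.

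For the first inclusion I would use the canonical decomposition $FQ=FQ_0\oplus J$, where $FQ_0$ is the commutative subalgebra spanned by the vertex idempotents and $J$ is the arrow ideal, spanned by the paths of positive length. Since the longest path in $Q$ has length $m-1$, every product of $m$ paths of positive length vanishes, that is $J^m=0$. Writing an arbitrary element as $a=a_0+a_+$ with $a_0\in FQ_0$ and $a_+\in J$, the commutativity of $FQ_0$ and the fact that $J$ is a two-sided ideal give $[a,b]\in J$ for all $a,b\in FQ$. Hence any evaluation of $g$ lands in $J^m=0$, so $g\in\id{FQ}$ and $\langle g\rangle_T\subseteq\id{FQ}$.

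For the second inclusion I would exhibit $UT_m(F)$ as a subalgebra of $FQ$, so that the identities of $FQ$ automatically restrict to it. Fix a path $v_0\to v_1\to\cdots\to v_{m-1}$ of maximal length $m-1$; since $Q$ is acyclic this path meets $m$ distinct vertices. The subalgebra of $FQ$ generated by the idempotents $e_{v_0},\dots,e_{v_{m-1}}$ together with the arrows along this path is spanned by these idempotents and by the connected subpaths $p_{ij}$ from $v_i$ to $v_j$ with $i<j$; a direct check shows these are closed under multiplication and that their products reproduce exactly the rule $E_{ij}E_{kl}=\delta_{jk}E_{il}$ for the matrix units of $UT_m(F)$. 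Thus this subalgebra is isomorphic to the path algebra of the linearly oriented $A_m$ quiver, which is $UT_m(F)$, and consequently $\id{FQ}\subseteq\id{UT_m(F)}$.

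The routine parts are the first inclusion and the identification of the line subalgebra with $UT_m(F)$, both of which are elementary bookkeeping. The one external ingredient, and the place where the characteristic-zero hypothesis enters, is Mal'cev's theorem $\id{UT_m(F)}=\langle g\rangle_T$; the main conceptual point is simply the reduction of the whole problem to upper triangular matrices, achieved by the nilpotency estimate $J^m=0$ on one side and the maximal-path embedding on the other.
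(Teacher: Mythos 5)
Your proposal is correct and follows essentially the same route as the paper: the inclusion $\id{FQ}\subseteq\id{UT_m(F)}$ via the subalgebra spanned by the vertices and subpaths of a maximal path (which is exactly the paper's embedding $F\overrightarrow{A\,}_m\cong UT_m(F)\subseteq FQ$ combined with Mal'cev's theorem), and the reverse inclusion by observing that every evaluated commutator lands in the arrow ideal $J$ with $J^m=0$. The paper phrases the latter step in terms of commutators of basis paths being single paths of positive length, but this is the same nilpotency argument.
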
 
In the process of proving Theorem~\ref{Thm:PIQuiverInto} we encountered a new class of  algebras that we call \emph{locally A-graded} (definition~\ref{Def:LocAGraded}), which are a generalization of Arnold's A-graded algebras \cite{Arnold}. Examples of locally A-graded algebras include matrix algebras, upper triangular matrix algebras and their nilpotent subalgebras, the commutative square quiver algebra, the path algebra of an oriented cycle, the Nakayama algebras. Our second result is as follows
\begin{thm}\label{Thm:LocAGradedInto}
A locally A-graded algebra is PI. 
\end{thm}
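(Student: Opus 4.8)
The plan is to realize a locally A-graded algebra $R$ as a subalgebra of a matrix algebra over a commutative ring and then invoke the Amitsur--Levitzki theorem together with the fact, already recalled above, that subalgebras of PI algebras are PI. Writing $R=\bigoplus_{a\in A}R_a$ for the grading, I would first single out the identity component $R_0$ and argue from Definition~\ref{Def:LocAGraded} that it is commutative: this is precisely the feature distinguishing the A-graded setting from an arbitrary graded algebra, and it is what ultimately forces the PI behaviour. The target is an embedding $R\hookrightarrow M_n(C)$ with $C$ commutative, from which $R$ inherits the standard identity $\St$ and hence is PI.

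To build this embedding I would construct a faithful representation of $R$ on an $A$-graded module $V=\bigoplus_{i\in I}V_i$, where $I$ is the index set underlying the grading (the set of vertices, in the quiver examples) and each $V_i$ is free of rank one over a commutative ground ring $C$ manufactured from $R_0$. The grading axioms force each homogeneous component $R_a$ to shift the grading of $V$ according to the groupoid/category structure on $A$, so that after choosing bases the action of $R$ lands inside $M_n(C)$, with the grading dictating in which entries each homogeneous component is supported. Commutativity of $R_0$ guarantees that $C$ is commutative, and the ``local'' hypothesis should supply a finite index set, giving the matrix size $n=|I|$.

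For the infinite-dimensional examples, most notably the path algebra of an oriented cycle, the same scheme applies after enlarging $C$ to a polynomial ring $F[t]$ in one central variable, where $t$ records how often a path winds around the cycle. The winding, which would otherwise push entries infinitely far up the grading, is thereby absorbed into an honest central parameter, and $R$ embeds into $M_n(F[t])$. With any such embedding $R\hookrightarrow M_n(C)$ over a commutative ring $C$ in hand, Amitsur--Levitzki shows that $M_n(C)$ satisfies $\St$, hence so does its subalgebra $R$, completing the proof. Note that the identity here is the standard polynomial rather than a product of commutators as in Theorem~\ref{Thm:AcyclicIntro}, which is unavoidable since full matrix algebras are among the examples and are not nilpotent modulo their commutators.

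The main obstacle I anticipate is the second step: extracting from the definition of locally A-graded both the commutativity of the degree-zero part and, more delicately, a \emph{uniform} finite bound $n$ on the matrix size valid across the whole algebra. Local finiteness on its own is insufficient, since infinite row- and column-finite triangular matrices fail to be PI; the crux is therefore to show that the A-grading organizes $R$ into blocks of uniformly bounded size, presumably by decomposing $A$ into connected components of bounded cardinality and treating $R$ accordingly. Once this uniformity is secured, the matrix embedding and Amitsur--Levitzki finish the argument.
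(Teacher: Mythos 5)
Your strategy (embed $R$ into $M_n(C)$ for a commutative ring $C$, then quote Amitsur--Levitzki and property \eqref{Eq:Subalgebra}) is entirely different from the paper's, which never constructs a representation: the paper reduces, by multilinearity and alternation of $\operatorname{St}_{2n}$, to evaluations on $2n$ distinct standard basis elements, associates to such a tuple its Swan quiver, observes that nonzero monomials of $\operatorname{St}_{2n}$ are unicursal paths there, and uses Swan's parity theorem (Theorem~\ref{gra1}) plus $\dim A_{i,j}^d\le 1$ to cancel them in pairs. Your target identity $\operatorname{St}_{2n}$ is the right one, but as written your argument has genuine gaps and does not constitute a proof.

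The central gap is that the embedding $R\hookrightarrow M_n(C)$ is never constructed, and the obvious candidates fail. The natural assignment $e_{ij}^{(d)}\mapsto t^d E_{ij}$ (or $x_{ij}^{(d)}E_{ij}$ with commuting variables) is not an algebra homomorphism, precisely because of the truncation clause in Definition~\ref{Def:LocAGraded}: one can have $e_{ij}^{(d_1)}e_{jk}^{(d_2)}=0$ because $A_{i,k}^{d_1+d_2}=0$, while the corresponding matrix product $t^{d_1+d_2}E_{ik}$ is nonzero (this already happens for Nakayama algebras $FC_n/I$ and for $F\overrightarrow{A\,}_3/\langle\alpha_1\alpha_2\rangle$). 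Repairing this forces you to impose relations on $C$ and then prove both that $C$ does not collapse and that the resulting map is injective --- and that consistency check is essentially the whole content of the theorem, which you have deferred rather than supplied. A second, concrete error: the degree-zero component $R_0$ need not be commutative. The definition allows $A^0_{i,j}\neq 0$ for $i\neq j$, and indeed $M_n(F)$ concentrated in degree zero is locally A-graded, so $R_0$ can be all of $M_n(F)$; your plan of manufacturing the commutative base ring $C$ from $R_0$ therefore starts from a false premise. Finally, the ``uniformity'' worry you flag at the end is moot --- the size $n$ is part of the data in Definition~\ref{Def:LocAGraded}, so the matrix format is finite by hypothesis --- but noticing this does not close the two gaps above.
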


By Theorem~\ref{Thm:PIQuiverInto} we know that the quiver $Q$ obtained by gluing two oriented cycles in a vertex $v$ is not PI. However, the last result of this paper states that a natural quotient of $Q$ is indeed PI.
\begin{thm}\label{Thm:TwoCyclesIntro}
Let $Q$ be the quiver obtained by gluing two oriented cycles $C(1)$ and $C(2)$ on a vertex $v$. Let $\alpha$ be an arrow of $C(1)$ ending in $v$  and let $\beta$ be an arrow of $C(2)$ starting from $v$, then the algebra $A=FQ/(\alpha \beta)$ is PI. 
\end{thm}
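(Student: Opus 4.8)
The plan is to exhibit in $A=FQ/(\alpha\beta)$ a square-zero two-sided ideal $N$ such that the quotient $A/N$ is PI, and then to conclude by the standard fact that an extension of a PI algebra by a nilpotent ideal is again PI. That fact is elementary: if $I\trianglelefteq A$ with $I^n=0$ and $A/I$ satisfies a nonzero polynomial identity $f$, then the product $f\big(x_1^{(1)},\dots\big)\cdots f\big(x_1^{(n)},\dots\big)$ of $n$ copies of $f$ in pairwise disjoint variables is a nonzero identity of $A$, since each factor takes values in $I$ and the free algebra is a domain.

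First I would fix notation for the two glued cycles. Write $C(1)\colon v\xrightarrow{a_1}\cdots\xrightarrow{a_k=\alpha}v$ and $C(2)\colon v\xrightarrow{b_1=\beta}\cdots\xrightarrow{b_\ell}v$, so that $\alpha=a_k$ is the unique arrow of $C(1)$ entering $v$ and $\beta=b_1$ the unique arrow of $C(2)$ leaving $v$. An $F$-basis of $A$ is given by the paths of $Q$ not containing $\alpha\beta$ as a subpath. The key structural observation is that, since $v$ is the only shared vertex and $\alpha\beta=0$, a path can cross from $C(2)$ into $C(1)$ (through the corner $b_\ell a_1$ at $v$) but never back from $C(1)$ into $C(2)$: to leave $C(1)$ a path must reach $v$, which forces its last $C(1)$-arrow to be $\alpha$, and the only continuation into $C(2)$ would be $\beta$, now killed. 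Hence every basis path has exactly one of three types: entirely inside $C(1)$, entirely inside $C(2)$, or \emph{mixed}, i.e.\ of the form $q\,p$ with $q$ a nontrivial path of $C(2)$ ending at $v$ (so by $b_\ell$) and $p$ a nontrivial path of $C(1)$ starting at $v$ (so by $a_1$). Let $N$ be the $F$-span of the mixed paths.

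The heart of the argument, and the main obstacle, is to verify that $N$ is a two-sided ideal with $N^2=0$. For the ideal property one multiplies a mixed path by an arbitrary basis path on either side and checks case by case that the result is again mixed or zero: the only transitions that occur at $v$ are the allowed corners $\alpha a_1$, $b_\ell a_1$, $b_\ell\beta$, unless the forbidden $\alpha\beta$ appears, in which case the product vanishes. For $N^2=0$, take mixed paths $m=q\,p$ and $m'=q'\,p'$; a nonzero product $mm'$ would require $p$ (ending in $C(1)$) to be composable with $q'$ (starting in $C(2)$), forcing them to meet at $v$. But then $p$ ends with $\alpha$ and $q'$ begins with $\beta$, so $mm'$ contains $\alpha\beta$ and is zero. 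This is precisely where the specific choice of $\alpha$ and $\beta$ is used.

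It remains to show $A/N$ is PI. I would define a homomorphism $\psi\colon A/N\to FC(1)\times FC(2)$ sending a nontrivial $C(1)$-path $p$ to $(p,0)$, a nontrivial $C(2)$-path $q$ to $(0,q)$, each idempotent $e_x$ with $x\neq v$ to the appropriate coordinate, and $e_v\mapsto(e_v,e_v)$. Multiplicativity reduces to the crossing products: a $C(2)$-path followed by a $C(1)$-path is mixed, hence $0$ in $A/N$, and is sent to $(0,q)(p,0)=(0,0)$; a $C(1)$-path followed by a $C(2)$-path meeting at $v$ already vanishes in $A$ through $\alpha\beta$; and all interactions through $v$ are respected by the diagonal image of $e_v$. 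A direct inspection of basis elements shows $\psi$ is injective. Since each oriented cycle is PI by Theorem~\ref{Thm:PIQuiverInto} (indeed $FC(i)\cong M_{k_i}(F[t])$), the product $FC(1)\times FC(2)$ is PI, and therefore so is its subalgebra $A/N$. Combining this with the nilpotent-extension fact applied to $N$ yields that $A$ is PI, as claimed.
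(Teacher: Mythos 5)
Your proof is correct, but it takes a genuinely different route from the paper's. The paper proves the refined statement (Theorem~\ref{Thm:PiRelation}) by directly evaluating the explicit polynomial $St_{2k}(x_1,\dots,x_{2k})St_{2k}(x_{2k+1},\dots,x_{4k})$, $k=\max\{n,m\}$, on tuples of standard basis paths: using the same ``one-way traffic'' observation you make (a nonzero path may cross from $C(2)$ into $C(1)$ at $v$ but never back, since returning would force the subword $\alpha\beta$), any nonzero product of $4k$ basis paths must have one of the two blocks of arguments contained entirely in a single cycle, so the corresponding $St_{2k}$ factor vanishes by Corollary~\ref{Cor:FCn} and \eqref{Eq:LeibnizStn}. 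You instead package that same observation structurally: the mixed paths span a square-zero two-sided ideal $N$, the quotient $A/N$ embeds in $FC(1)\times FC(2)$, and PI-ness lifts through nilpotent extensions. Your verifications of the ideal property, of $N^2=0$, and of the multiplicativity of $\psi$ are all sound, and your route in fact recovers the paper's explicit identity (take $f=St_{2k}$ as the identity of $FC(1)\times FC(2)$; the product of two disjoint copies is exactly $St_{2k}^{(1)}St_{2k}^{(2)}$), while additionally isolating \emph{why} the quotient is PI and suggesting how the argument might generalize to other gluings. One small inaccuracy: the parenthetical claim $FC(i)\cong M_{k_i}(F[t])$ is false as stated --- $FC_n$ is only a proper subalgebra of $M_n(F[t])$ (the two algebras have non-isomorphic simple quotients) --- but this is harmless, since the PI-ness of $FC(i)$ that you actually need is supplied by Theorem~\ref{Thm:PIQuiverInto} or Corollary~\ref{Cor:FCn}, which you also cite.
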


 The paper is organized as follows: in Section~\ref{Sec:PI} we recall facts about PI theory. In Section~\ref{Sec:Quivers} we recall basic notions on quivers and quiver algebras. In Section~\ref{Sec:Unicursal} we introduce our main tool to prove Theorem~\ref{Thm:LocAGradedInto}, namely the graph theoretic machinery used by  Swan to prove Amitsur-Levitski's Theorem; this section contains also Lemma~\ref{Lemma:DisconnectingArrows} which is used in the proof of Theorem~\ref{Thm:PIQuiverInto}. We prove Theorems~\ref{Thm:LocAGradedInto}, \ref{Thm:PIQuiverInto}, \ref{Thm:AcyclicIntro}, \ref{Thm:TwoCyclesIntro} in Sections~\ref{Sec:LocAgraded}, \ref{Sec:PIquivers}, \ref{Sec:Acyclic}, \ref{Sec:PIQuiverRelations}, respectively.
\newline\newline
\noindent
\textbf{Acknowledgements}. This project has been supported by the PRIN 2022 SQUARE and it has been presented in the related conference SQUARE, partially supported by CIRM, which took place at the CIRM facility in Vason in June 2025. We thank the organizers and the participants of that conference for many stimulating discussions. In particular, we thank Pere Ara for pointing out the connection with Leavitt path algebras. We thank Azzurra Ciliberti for useful discussions. We thank Kulumani Rangaswamy to suggest the open question~\ref{OQ:Rangaswamy}.

\section{Polynomial Identities}\label{Sec:PI}

In this section, we report basic notions about the theory of polynomial identities. Standard references are \cite{AGPRmono} and \cite{GZb}. We fix a field $F$. By an algebra we mean an associative algebra over $F$. Given a countable set of non-commutative variables $X=(x_{1},x_{2},\dots)$ we denote by $F\langle X\rangle=F\langle x_{1},x_{2},\dots\rangle$ the free associative algebra generated by $X$. Its elements are hence non-commutative polynomials in $X$ with coefficients in $F$. 
\begin{defi}
Let $A$ be an $F$-algebra and $f \in F\langle x_{1},\dots, x_{n}\rangle$.
We say that $f$ is a \emph{polynomial identity for $A$} (or simply a \emph{PI for $A$}) if $f(a_1, \dots, a_n)=0$ for all $a_1, \dots, a_n \in A$. If there exists a non-zero polynomial identity for $A$ then $A$ is called \emph{PI}.
\end{defi}

\begin{ex}\label{Ex:CommutativeAlgebra}
A commutative algebra is PI. Indeed, it satisfies the polynomial 
$[x_1, x_2]\coloneqq x_1x_2- x_2x_1,$ called the \emph{Lie commutator} of $x_1$ and $x_2$. 
\end{ex}
\begin{ex}\label{Ex:NilpotentAlgebra}
A nilpotent algebra $A$ is $PI$. Indeed, the polynomial $x_1^n$ is a polynomial identity for $A$, where $n$ is its nilpotency index. 
\end{ex}

\begin{lemma}\label{Lem:2x2Matrices}
The algebra $M_2(F)$ of $2\times 2$ matrices over $F$ is PI. Indeed, the polynomial in three variables $[x_1, [x_2,x_3]^2]$ is a PI for it. 
\end{lemma}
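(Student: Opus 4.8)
The plan is to exploit the Cayley--Hamilton theorem together with the elementary fact that a commutator is traceless. First I would record the characteristic-polynomial identity for $2\times 2$ matrices: every $A\in M_2(F)$ satisfies
\[
A^2-\operatorname{tr}(A)\,A+\det(A)\,I=0.
\]
The key observation is that for any $B,C\in M_2(F)$ the commutator $[B,C]=BC-CB$ has trace zero, since $\operatorname{tr}(BC)=\operatorname{tr}(CB)$.

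Next I would apply Cayley--Hamilton to the traceless matrix $M=[B,C]$. With $\operatorname{tr}(M)=0$ its characteristic polynomial collapses to $M^2+\det(M)\,I=0$, so that $M^2=-\det(M)\,I$ is a scalar multiple of the identity. Since scalar matrices are central in $M_2(F)$, the evaluation of $[x_1,[x_2,x_3]^2]$ at an arbitrary triple $(A,B,C)$ becomes
\[
\bigl[A,\,-\det([B,C])\,I\bigr]=0.
\]
Hence $[x_1,[x_2,x_3]^2]$ vanishes under every substitution, so it is a polynomial identity for $M_2(F)$, and in particular $M_2(F)$ is PI.

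I do not expect a genuine obstacle here: the argument is short and self-contained. The only points deserving care are verifying that the characteristic polynomial of a $2\times 2$ matrix has exactly the trace--determinant shape above, and that the trace of a commutator vanishes; both are routine. The conceptual step one must not skip is recognizing that the \emph{square} of a traceless $2\times 2$ matrix is forced to be central, which is precisely what makes the outer commutator with $x_1$ collapse to zero.
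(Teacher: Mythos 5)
Your proof is correct and follows exactly the paper's argument: Cayley--Hamilton plus the vanishing trace of a commutator forces $[B,C]^2$ to be a scalar (hence central) matrix, so the outer commutator vanishes. You even make explicit the step $\operatorname{tr}([B,C])=0$ that the paper leaves implicit.
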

\begin{proof}
By the Cayley-Hamilton theorem, $A^{2}-\mathrm{tr}(A)A+\mathrm{det}(A)\mathbf{1}_{2}=0_{M_{2}(F)}$, for every $A\in M_2(F)$. In particular, if $A=[B,C]$ is a commutator then $A^{2}=-\mathrm{det}(A)\mathbf{1}_{2}$ is a central element. Thus $[A,[B,C]^{2}]=0_{M_{2}(F)}$ for every $A,B,C\in M_{2}(F)$.
\end{proof}
\begin{ex}\label{Ex:FiniteDimAlgebra}
A finite dimensional algebra is $PI$. Indeed,  the determinant in $n+1$ variables is a polynomial identity, where $n$ denotes its dimension.
\end{ex}
\begin{nex}\label{Nex:FreeAlgebra}
The free algebra $F\langle x_{1},\dots, x_{n}\rangle$ in $n\ge 2$ variables does not satisfy any polynomial identity. Indeed, the existence of such an identity would contradict its freeness.
\end{nex}
\begin{defi}
Given an $F$-algebra $A$, we denote by
\[
\id{A}\coloneqq \{f \in \fx \ | \ \text{f is PI for A} \}
\]
the \emph{set of polynomial identities of $A$}.
\end{defi}
We recall that $\id A$ is a \emph{$T$-ideal}, which means that it is a two-sided ideal of $\fx$ stable under every endomorphism of $\fx$. In simple terms, this means that
given a polynomial \( f(x_1, \dots, x_n) \in Id(A) \), if each variable \( x_i \) is replaced by an arbitrary polynomial \( g_i \in F\langle X \rangle \) then  
\(f(g_1, \dots, g_n) \in \id{A}\).
The following useful properties of T-ideals hold 
\begin{align} \label{Eq:Subalgebra}
&\textrm{If }B\subseteq A\textrm{ is a subalgebra, }\id{A}\subseteq \id{B},\\\label{Eq:IdQuotAlgebra}
&\textrm{If }B=A/I\textrm{ is a quotient algebra, then }\id{A}\subseteq \id{B}.
\end{align}
In particular, subalgebras and quotients of PI algebras are PI algebras.

Given  non-empty subset $Y \subseteq F\langle X\rangle$, we denote by $\langle Y  \rangle_T$ the $T$-ideal generated by $Y$ in $F\langle X\rangle$.

The following remarkable result provides a  description of the T-ideal of the algebra $UT_n(F)$ of upper triangular $n\times n$ matrices with entries in $F$.

\begin{thm}\cite[Theorem~1]{Malcev}\label{Thm:Malcev}
Let $F$ be a field of characteristic zero. Then 
\(
\id{UT_n(F)}= \langle u_n\rangle_T
\)
where 
\begin{equation}\label{Eq:un}
u_n(x_1,\dots, x_{2n})\coloneq [x_1,x_2][x_3,x_4]\dots [x_{2n-1},x_{2n}].
\end{equation}
\end{thm}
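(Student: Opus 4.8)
The plan is to prove the two inclusions $\langle u_n\rangle_T\subseteq\id{UT_n(F)}$ and $\id{UT_n(F)}\subseteq\langle u_n\rangle_T$ separately. For the first inclusion it suffices, since $\id{UT_n(F)}$ is a $T$-ideal, to check that $u_n$ itself is a polynomial identity for $UT_n(F)$. The key observation is that the commutator $[A,B]$ of two upper triangular matrices is strictly upper triangular: the diagonal of a product of upper triangular matrices is the entrywise product of the diagonals, and these commute in $F$, so the diagonal of $[A,B]$ vanishes. The strictly upper triangular matrices form a nilpotent ideal $N$ with $N^n=0$ (the $(i,j)$-entry of a product of $k$ of them can be nonzero only when $j-i\ge k$). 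Since $u_n$ is a product of $n$ commutators, every evaluation of $u_n$ on $UT_n(F)$ is a product of $n$ elements of $N$, hence zero (cf. Example~\ref{Ex:NilpotentAlgebra}).

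The reverse inclusion is the substantial part. Because $\car F=0$, the $T$-ideal $\id{UT_n(F)}$ is determined by its multilinear elements, so it suffices to show that every multilinear identity of $UT_n(F)$ already lies in $\langle u_n\rangle_T$. I would first put polynomials in a normal form: using $x_ix_j=x_jx_i+[x_i,x_j]$ together with the Leibniz and Jacobi rules (equivalently, the PBW theorem for the free associative algebra viewed as the enveloping algebra of the free Lie algebra), every multilinear polynomial is an $F$-linear combination of products
\[
x_{i_1}\cdots x_{i_p}\, w_1\cdots w_k,
\]
where $i_1<\cdots<i_p$ and each $w_j$ is a left-normed commutator of length at least $2$.

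The next reduction is that, modulo $\langle u_n\rangle_T$, one may keep only those terms with at most $n-1$ commutator factors. Indeed, a left-normed commutator of length $\ge 2$ is itself a commutator $[h,y]$ of two polynomials, so a product $w_1\cdots w_n$ of $n$ such factors is obtained from $u_n$ by the substitution sending each degree-two commutator $[x_{2i-1},x_{2i}]$ to the block $w_i$; by the $T$-ideal property this product lies in $\langle u_n\rangle_T$. Hence the multilinear part of $F\langle X\rangle/\langle u_n\rangle_T$ is spanned by the normal-form monomials above with $k\le n-1$.

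The main obstacle, and the heart of the proof, is to show that these normal-form monomials with at most $n-1$ commutators are linearly independent modulo $\id{UT_n(F)}$; this forces the multilinear parts of $\langle u_n\rangle_T$ and $\id{UT_n(F)}$ to coincide, giving the inclusion. I would establish the independence by explicit evaluation on generic upper triangular matrices: a product of $k$ commutators takes values in $N^k$, i.e.\ in the span of the matrix units $e_{ij}$ with $j-i\ge k$, and by feeding in suitable combinations of diagonal matrices and the superdiagonal units $e_{i,i+1}$ one can isolate a single matrix entry (for example the $(1,n)$-entry detects precisely the terms carrying the maximal number $n-1$ of commutators) to separate the normal-form elements one multidegree at a time. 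Organizing these evaluations so that the images of the spanning monomials in the relatively free algebra of $UT_n(F)$ are visibly independent---equivalently, matching the codimension growth of $\langle u_n\rangle_T$ with that of $\id{UT_n(F)}$---is the delicate bookkeeping step on which the argument rests.
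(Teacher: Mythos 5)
The paper offers no proof of this statement---it is quoted directly from Mal'tsev's article \cite[Theorem~1]{Malcev}---so there is nothing internal to compare against; your proposal has to be judged on its own. Your first inclusion is correct and complete: commutators of upper triangular matrices lie in the strictly upper triangular ideal $N$, and $N^n=0$, so $u_n$ vanishes on $UT_n(F)$. The reduction to multilinear polynomials, the PBW normal form $x_{i_1}\cdots x_{i_p}w_1\cdots w_k$, and the observation that any product of $n$ higher commutators is a substitution instance of $u_n$ are likewise sound.

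The final step, however, on which the whole argument rests, contains a genuine gap: the normal-form monomials with $k\le n-1$ commutator factors are \emph{not} linearly independent modulo $\id{UT_n(F)}$. A dimension count already refutes this for $n=2$ in degree $4$. Your spanning set there has $1+\binom{4}{2}\cdot 1!+\binom{4}{3}\cdot 2!+\binom{4}{4}\cdot 3!=21$ elements (the multilinear component of the free Lie algebra on $l$ letters has dimension $(l-1)!$), whereas the fourth codimension of $UT_2(F)$ is $2^{3}(4-2)+2=18$; equivalently, the degree-$4$ multilinear part of $\id{UT_2(F)}$ is the $6$-dimensional span of the $u_2(x_{\sigma(1)},\dots,x_{\sigma(4)})$, and $24-6=18<21$. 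The missing relations come from facts such as $[[x_1,x_2],[x_3,x_4]]=[x_1,x_2][x_3,x_4]-[x_3,x_4][x_1,x_2]\in\langle u_2\rangle_T$, which via the Jacobi identity allow one to permute the entries of a left-normed commutator from the third position onward modulo terms with strictly more commutator factors. The actual content of Mal'tsev's proof is precisely this further reduction: one must shrink the spanning set to products of at most $n-1$ left-normed commutators $[x_{j_1},x_{j_2},\dots,x_{j_l}]$ with $j_2<j_3<\cdots<j_l$ and $j_1>j_2$, suitably ordered among themselves, and only \emph{then} does the evaluation argument on diagonal matrices and superdiagonal units establish linear independence modulo $\id{UT_n(F)}$. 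As written, the independence you assert is false, so the two multilinear spaces cannot be matched by your argument without this additional, and essential, intermediate step.
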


The hypothesis on the characteristic of the field $F$ in Theorem~\ref{Thm:Malcev} is mainly due to the following result, which holds only in characteristic zero. Recall that a polynomial $f$ is said to be \emph{multilinear} if it is linear in every variable, i.e. its degree with respect to each variable is one.

\begin{teo}\label{char 0 ordinary}
 Let $F$ a field of characteristic zero and let $A$ be an $F$-algebra. Then $\id A$ is generated by multilinear polynomials. 
 \end{teo}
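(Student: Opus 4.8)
The plan is to reduce an arbitrary identity to multilinear ones in two steps: first replacing $f$ by its multihomogeneous components, then linearizing each component. Throughout, the key point is that $F$, being of characteristic zero, is infinite and that all the integer multinomial coefficients which appear are invertible.

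First I would show that every multihomogeneous component of an identity is again an identity lying in the same T-ideal. Fix $f \in \id{A}$ and a variable, say $x_1$, in which $f$ has degree $\le d$; write $f = \sum_{j=0}^{d} f_j$, where $f_j$ is homogeneous of degree $j$ in $x_1$. Since $\id{A}$ is a T-ideal, it is stable under the endomorphism $x_1 \mapsto \lambda x_1$ for each $\lambda \in F$, so $\sum_j \lambda^j f_j \in \id{A}$ for every $\lambda$. Choosing $d+1$ distinct scalars $\lambda_0, \dots, \lambda_d$ (possible since $F$ is infinite) and inverting the resulting Vandermonde matrix expresses each $f_j$ as an $F$-linear combination of these elements of $\id{A}$; hence $f_j \in \id{A}$. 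Iterating over all variables shows that each multihomogeneous component of $f$ is an identity, and since $f$ is the sum of its components, the T-ideal generated by $f$ coincides with the one generated by its multihomogeneous components.

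Second I would linearize a multihomogeneous identity $g$ of multidegree $(d_1, \dots, d_n)$ one variable at a time. To linearize, say, $x_1$, I substitute $x_1 \mapsto y_1 + \dots + y_{d_1}$ in $g$; by the first step, the component $h$ that is multilinear in $y_1, \dots, y_{d_1}$ is again an identity and lies in $\langle g \rangle_T$. Conversely, applying the endomorphism $y_1, \dots, y_{d_1} \mapsto x_1$ to $h$ recovers $d_1!\,g$, so that $g \in \langle h \rangle_T$ because $d_1! \ne 0$ in characteristic zero. Repeating this for every variable of degree at least two produces a completely multilinear identity $\hat{g} \in \id{A}$ with $g \in \langle \hat{g} \rangle_T$.

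Combining the two steps, every $f \in \id{A}$ is a consequence of the multilinear polynomials obtained from its components, all of which lie in $\id{A}$; hence $\id{A}$ is generated as a T-ideal by its multilinear elements. The only place where the hypothesis on $F$ is essential — and the crux of the argument — is the invertibility of the factorials $d_i!$ in the recovery step: over a field of positive characteristic $p$ these can vanish once a degree reaches $p$, and the conclusion genuinely fails there.
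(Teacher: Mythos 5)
Your proof is correct: it is the standard multilinearization argument (Vandermonde extraction of multihomogeneous components, followed by polarization and recovery via the substitution $y_i\mapsto x_1$ producing the factor $d_1!$), which is exactly the argument behind this classical result; the paper itself states the theorem without proof, deferring to the standard references \cite{AGPRmono}, \cite{GZb}, where this same argument appears. No gaps.
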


 An important example of multilinear polynomial is the \emph{standard polynomial} of degree $n$:
 \begin{equation}\label{Def:Stn}
 St_{n}(x_1, \dots, x_n) \coloneqq \sum_{\sigma \in S_n} \text{sgn}({\sigma}) x_{\sigma(1)} \cdots x_{\sigma(n)},
 \end{equation}
 where $S_n$ denotes the \emph{symmetric group} of degree $n$ and $\text{sgn}({\sigma})$ is the sign of the permutation $\sigma$. Indeed, the following remarkable result holds about the algebra $M_n(F)$ of $n\times n$ matrices over $F$.
 
 \begin{teo}[Amitsur-Levitski]\cite{AmitsurLevitski1940}\label{Thm:Amitsur-Levitski} 
 For every $n\ge 1$, $St_{2n}\in\id{M_n(F)}$.
 \end{teo}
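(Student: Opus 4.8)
The plan is to follow Swan's graph-theoretic method. Since $St_{2n}$ is multilinear, it is a polynomial identity for $M_n(F)$ if and only if it vanishes on every substitution of basis elements; as the matrix units $\{e_{ij}\}_{1\le i,j\le n}$ form a basis of $M_n(F)$, it suffices to prove that $St_{2n}(e_{i_1j_1},\dots,e_{i_{2n}j_{2n}})=0$ for every choice of index pairs $(i_k,j_k)$. This reduction uses only multilinearity and holds in every characteristic.

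Next I would encode the product rule $e_{ij}e_{kl}=\delta_{jk}e_{il}$ combinatorially. To the chosen matrix units I associate the directed multigraph $G$ on vertex set $\{1,\dots,n\}$ whose $k$-th edge $a_k$ runs from $i_k$ to $j_k$. A monomial $e_{i_{\sigma(1)}j_{\sigma(1)}}\cdots e_{i_{\sigma(2n)}j_{\sigma(2n)}}$ is nonzero precisely when consecutive edges are head-to-tail, i.e. when $a_{\sigma(1)},\dots,a_{\sigma(2n)}$ is an Eulerian trail of $G$, in which case it equals $e_{i_{\sigma(1)},\,j_{\sigma(2n)}}$. Collecting terms, the $(a,b)$-entry of $St_{2n}(e_{i_1j_1},\dots,e_{i_{2n}j_{2n}})$ equals $\sum_{T}\sgn(\sigma_T)$, the signed count of Eulerian trails $T$ from $a$ to $b$, where $\sgn(\sigma_T)$ is the sign of the permutation ordering the edges of $G$ along $T$. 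The theorem is thus equivalent to the statement that, in a digraph with an even number $2n$ of edges, the signed count of Eulerian trails between any two fixed vertices vanishes.

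I would then prove this vanishing by constructing a fixed-point-free, sign-reversing involution $T\mapsto T'$ on the set of Eulerian trails with fixed endpoints $(a,b)$. The idea is that, with $2n+1$ vertex-slots along the trail but at most $n$ distinct vertices, some vertex $w$ is visited more than once; the trail then decomposes at $w$ into consecutive closed subwalks based at $w$, and a suitable local move on these subwalks (a transposition of two of them, or the reversal of an even-length closed portion) produces a new Eulerian trail with the same endpoints. The essential point is that the evenness of $2n$ forces the parity of the induced permutation of edge-labels to change, so that $\sgn(\sigma_{T'})=-\sgn(\sigma_T)$; the contributions of $T$ and $T'$ cancel, and summing over all canceling pairs yields $0$ for every entry.

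The main obstacle is precisely the construction and verification of this involution. One must make a canonical choice of the repeated vertex and of the subwalks to be modified so that the map is genuinely an involution, never fixes a trail, and always preserves the endpoints, and then carry out the parity bookkeeping showing that the edge-ordering sign flips under the chosen move. The delicate cases are self-loops $i_k=j_k$, repeated edges, the difference between open trails (two odd-degree vertices) and closed trails (all degrees even), and configurations where a naive local move would preserve the sign; these are exactly where the hypothesis that $2n$ is even must be exploited. An alternative route avoiding the combinatorics is Rosset's exterior-algebra argument, but it yields the characteristic-zero case first and then appeals to an integrality argument, whereas Swan's method gives all characteristics uniformly, which is why I would adopt it for the statement as phrased here.
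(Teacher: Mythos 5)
Your reduction is exactly the mechanism this paper uses elsewhere: you pass to matrix units by multilinearity, build the directed multigraph of the chosen units (the paper's ``Swan quiver,'' Definition~\ref{Def:SwanQuiver}), identify nonzero monomials with unicursal (Eulerian) paths, and observe that the graph has $2n$ edges on at most $n$ vertices, so condition~\eqref{Eq:SwanQuiver} holds. At that point the theorem follows from the paper's Theorem~\ref{gra1}, which is precisely the statement you arrive at: the signed count of unicursal paths between two fixed vertices vanishes whenever $\lvert G_1\rvert\ge 2\lvert G_0\rvert$. The paper does not prove Amitsur--Levitski or Theorem~\ref{gra1} itself; it cites both (to \cite{AmitsurLevitski1940} and \cite{Swan} respectively), and its own Theorem~\ref{teo bis} applied to the locally A-graded algebra $M_n(F)$ recovers Amitsur--Levitski by the same route you propose.

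The genuine gap is in your third paragraph: the sign-reversing involution is not constructed, and the move you sketch does not work as stated. Transposing two consecutive closed subwalks of lengths $k$ and $\ell$ based at a repeated vertex induces a block transposition of edge labels, whose sign is $(-1)^{k\ell}$; this is $+1$ whenever either block has even length, so the pairing fails to reverse sign in general, and reversing a closed portion of a directed trail need not even yield a valid trail. This is exactly where Swan's hypothesis $\lvert G_1\rvert\ge 2\lvert G_0\rvert$ must be used in a nontrivial way --- mere existence of a repeated vertex is not enough --- and it is the reason the combinatorial lemma is the hard part of the proof (Swan's own 1963 argument required a later correction on this point). As written, you have correctly reduced Amitsur--Levitski to Theorem~\ref{gra1} but have not proved it; either cite that theorem, as the paper does, or supply the full case analysis you acknowledge as ``the main obstacle.''
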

Notice that the following Leibniz type formula  holds
\[
 St_{n+1}(x_1, \dots, x_{n+1})=\sum_{i=1}^{n+1}(-1)^{i+1}\,x_i\, St_{n}(x_1,\dots, \hat{x_i},\dots, x_{n+1})
 \]
 so that 
 \begin{equation}\label{Eq:LeibnizStn}
 St_m\in \langle St_n\rangle_T, \;\forall \,m\ge n.
 \end{equation}
 The standard polynomial is an example of an alternating polynomial.
 Recall that a polynomial $f\coloneqq f(x_1, \dots,x_n; y_1,\dots, y_m)$, which is multilinear in the first $n$ variables, is \emph{alternating} in $x_1, \dots, x_n$ if, for every $i \in \{1, \dots, n\}$,
 \[ f(x_1, \dots, x_i, \dots, x_i, \dots, x_n; y_1, \dots, y_m)=0_{F\langle X\rangle}.\]

\section{Quivers and Path Algebras}\label{Sec:Quivers}

In this section we fix notation and recall some basic facts about quivers and path algebras. 
Standard references are \cite{CB}, \cite{ARS}, \cite{Ringel}, \cite{ASS}  and \cite{Schiffler}. 

\smallskip

A \emph{quiver} $Q$ is a quadruple $Q=(Q_{0},Q_{1},s,t)$ where
\begin{itemize}
    \item $Q_{0}$ is a finite set of \emph{vertices},
    \item $Q_{1}$ is a finite set of \emph{arrows},
    \item $s, t: Q_1 \to Q_0$ are two functions assigning to each arrow $\alpha \in Q_1$ its \emph{starting} vertex $s(\alpha)$ and its \emph{terminal} vertex $t(\alpha)$.
\end{itemize}

An arrow $\alpha\in Q_{1}$ is graphically represented as:
\begin{tikzcd}
   s(\alpha) \arrow[r, "\alpha"] & t(\alpha)
\end{tikzcd}

\begin{ex}
An example of quiver is given by:
\begin{center}
\begin{tikzcd}
    1 \arrow[r, "\alpha"] & 2 \arrow[r, "\beta"] & 3
\end{tikzcd}
\end{center}
where $Q_0 = \{1,2,3\}$, $Q_1 = \{\alpha, \beta\}$, $1=s(\alpha)$, $2=t(\alpha)=s(\beta)$, $3=t(\beta)$.
\end{ex}

A \emph{path} $p$ of length $n=l(p)\ge1$ in a quiver $Q$ is a sequence $p=(\alpha_{1},\alpha_{2},\dots,\alpha_{n})$ of $n$ arrows of $Q$ such that  $t(\alpha_i) = s(\alpha_{i+1})$ for all $1 \leq i \leq n-1$; the vertex $s(p)\coloneq s(\alpha_1)$ is the \emph{starting vertex} of $p$ and the vertex $t(p)\coloneq t(\alpha_n)$ is the \emph{terminal vertex} of $p$; the \emph{support} of $p$ is the subquiver of $Q$ with arrows  $\alpha_{1},\dots,\alpha_{n}$ and whose vertices are the starting and terminal vertex of each of its arrows. Instead of $p=(\alpha_{1},\alpha_{2},\dots, \alpha_{n})$ we use the shorthand notation $p=\alpha_{1}\alpha_{2}\dots\alpha_{n}$.  To every vertex $v\in Q_{0}$ it is formally attached the path $e_{v}$ which has length zero and starts and ends at $v$. The path $e_{v}$ is called the \emph{lazy path} at $v$. A \emph{closed path} in $Q$ is a path $c$ such that $t(c) = s(c)$. The support of a closed path in $Q$ is called an \emph{oriented cycle} of $Q$. An oriented cycle with $n$ vertices is called an \emph{$n$-cycle} and it will be denoted by $C_n$. A \emph{loop} is a $1$-cycle. A quiver is said to be \emph{acyclic} if it contains no oriented cycles. 

Given a quiver $Q$ and a field $F$, the \emph{path algebra} $FQ$ of $Q$ is the $F$-vector space spanned by all possible paths in $Q$, including lazy paths.
The multiplication of two paths $p$ and $q$ in $Q$ is the path $pq$ if $t(p)=s(q)$ and it is zero otherwise.  The multiplication in $FQ$ is defined by extending linearly the  path multiplication. The set of all paths in $Q$ is called the \emph{standard basis} of $FQ$. A path algebra $FQ$ is finite dimensional if and only if $Q$ is acyclic.

The path algebra $FQ$ is graded by the length of the paths, i.e. there is a direct sum decomposition $FQ=\bigoplus_{d\ge0} FQ^{d}$ where $FQ^{d}$ is the span of all paths in $Q$ of length $d$. Moreover, the set of lazy paths $\{e_{i}\mid\,i\in Q_{0}\}$ is a complete set of primitive orthogonal idempotents of $FQ$. Thus $FQ$ admits a decomposition $FQ=\bigoplus_{i,j\in Q_{0}}e_{i}FQe_{j}$. Here, $e_{i}FQe_{j}$ is  hence the vector subspace of $FQ$ generated by paths starting at vertex $i$ and ending at vertex $j$. Combining these two decompositions we get 
\begin{equation}\label{Eq:FQLocAGraded}
FQ=\bigoplus_{d,i,j} FQ^{d}_{i,j}
\end{equation}
where $FQ^{d}_{i,j}\coloneq FQ^{d}\cap e_{i}FQe_{j}$.

Let $\mathfrak{m}$ denote the two-sided ideal of $FQ$ generated by $Q_{1}$. A two sided ideal $I\subset FQ$ is called \emph{admissible} if there exists a positive integer $k$ such that $\mathfrak{m}^{k}\subseteq I\subseteq \mathfrak{m}^{2}$. The quotient of a path algebra by an admissible ideal is called a \emph{bound quiver algebra}. In particular, a bound quiver algebra is finite dimensional. A quotient $FQ/I$ where $I\subseteq \mathfrak{m}^{2}$ is simply called a \emph{quiver algebra}.

If an admissible ideal $I$ of $FQ$ is generated by paths in $Q$ then $I$ is called a \emph{monomial} ideal and  $FQ/I$ is called a \emph{monomial quiver algebra}. It has a basis formed by paths called its \emph{standard basis}. 
\begin{ex}\label{Ex:PathAlgebra}
For the quiver
\begin{tikzcd}
    1 \arrow[r, "\alpha"] & 2 \arrow[r, "\beta"] & 3
\end{tikzcd}
the path algebra $FQ$ has standard basis $\{e_1, e_2, e_3, \alpha, \beta, \alpha\beta\}$. The ideal $I=\langle\alpha\beta\rangle$ is admissible and the bound quiver algebra $FQ/I$ has standard basis $\mathcal{B}=\{e_1, e_2, e_3, \alpha, \beta\}$.
\end{ex}

\begin{ex}\label{Ex:Loop}
   If \(Q\) is a loop, $FQ$ is isomorphic to the polynomial ring $F[x]$.
\end{ex}

\begin{lemma}\label{Lem:AnUpperTriangular}
Let $Q=\overrightarrow{A\,}_{n}$ be the equioriented quiver of type $A_{n}$, i.e.
\begin{equation}\label{An}
\overrightarrow{A\,}_n\coloneq 
\begin{tikzcd}
1 \arrow[r, "\alpha_{1}"] & 2 \arrow[r, "\alpha_{2}"] & \cdots \arrow[r, "\alpha_{n-1}"] & n.
\end{tikzcd}
\end{equation}
Then  $FQ\cong UT_n(F)$.
\end{lemma}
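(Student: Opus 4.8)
The plan is to exhibit an explicit isomorphism by matching the standard basis of $FQ$ against the matrix units of $UT_n(F)$. First I would enumerate the paths in $\overrightarrow{A\,}_n$: since the only arrows are $\alpha_i\colon i\to i+1$, for each pair $(i,j)$ with $i\le j$ there is exactly one path $p_{ij}$ from $i$ to $j$, namely $p_{ii}=e_i$ and $p_{ij}=\alpha_i\alpha_{i+1}\cdots\alpha_{j-1}$ for $i<j$; when $i>j$ there is no path from $i$ to $j$. Hence the standard basis of $FQ$ is $\{\,p_{ij}\mid 1\le i\le j\le n\,\}$, which has $\binom{n+1}{2}$ elements, matching $\dim_F UT_n(F)$.

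Next I would define $\phi\colon FQ\to UT_n(F)$ on the standard basis by $\phi(p_{ij})=E_{ij}$, where $E_{ij}$ is the matrix unit with a $1$ in position $(i,j)$ and zeros elsewhere, and extend by linearity. Because paths run only from smaller to larger vertices, every basis element lands in the upper triangular part, so $\phi$ indeed takes values in $UT_n(F)$. To check that $\phi$ is an algebra homomorphism it suffices to verify it on basis elements. The product of paths satisfies $p_{ij}\,p_{kl}=\delta_{jk}\,p_{il}$, since the product is nonzero precisely when $t(p_{ij})=j$ equals $s(p_{kl})=k$; this mirrors exactly the matrix-unit relation $E_{ij}E_{kl}=\delta_{jk}E_{il}$, so that $\phi(p_{ij}p_{kl})=\phi(p_{ij})\phi(p_{kl})$. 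Moreover $\phi$ sends the unit $\sum_i e_i$ of $FQ$ to $\sum_i E_{ii}=\mathbf{1}_n$.

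Finally, since $\{\,E_{ij}\mid i\le j\,\}$ is precisely the standard basis of $UT_n(F)$, the map $\phi$ carries a basis bijectively onto a basis and is therefore a linear isomorphism; being also multiplicative and unital, it is an isomorphism of $F$-algebras. The argument is essentially bookkeeping, so there is no serious obstacle; the only point requiring care is the compatibility of conventions, namely orienting the correspondence $p_{ij}\leftrightarrow E_{ij}$ so that the order in which paths compose (the condition $t(p)=s(q)$) agrees with the index contraction $E_{ij}E_{jl}=E_{il}$, which is exactly what forces the target to be upper triangular rather than lower triangular matrices.
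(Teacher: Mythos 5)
Your proof is correct and is exactly the paper's argument: the paper also sends $p_{i,j}\mapsto e_{ij}$ (with $p_{ii}=e_i$ and $p_{i,j}=\alpha_i\cdots\alpha_{j-1}$) and asserts this is an isomorphism, while you simply spell out the verification (dimension count, the relation $p_{ij}p_{kl}=\delta_{jk}p_{il}$ matching $E_{ij}E_{kl}=\delta_{jk}E_{il}$, and unitality). No differences of substance.
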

\begin{proof}
The map $p_{i,j}\mapsto e_{ij}$, where $p_{ii}=e_i$, $p_{i,j}=\alpha_{i}\cdots\alpha_{j-1}$ and $e_{ij}$ is the matrix whose only non-zero entry is $1$ at $(i,j)$, is an isomorphism.
\end{proof}

\section{Unicursal paths}\label{Sec:Unicursal}
For convenience of the reader, in this section we collect some results and notions from \cite{Swan} and also prove Lemma~\ref{Lemma:DisconnectingArrows}, to be used later on. 

\begin{defi}\cite[Definition~2]{Swan}
    Let $\Sigma=(\Sigma_{0},\Sigma_{1},s,t)$ be a finite quiver with arrow set $\Sigma_{1} = \{\alpha_1, \ldots, \alpha_m\}$. A path $\gamma$ in $\Sigma$ is called \emph{unicursal} if it contains all arrows of $\Sigma$ exactly once. For a unicursal path $\gamma=\alpha_{\sigma(1)}\cdots \alpha_{\sigma(m)}$ we define $\operatorname{sgn}(\gamma) \coloneq  \operatorname{sgn}(\sigma)$.
\end{defi}
Unicursal paths are most commonly known as \emph{Eulerian tours} \cite[Ch.~10]{Stanley}.

The \emph{flow} of a vertex $v$ of a quiver $\Sigma$ is the number of arrows starting from $v$ minus the number of arrows ending in $v$. 
\begin{ex}
The quiver in Example~\ref{Ex:PathAlgebra} has one vertex of flow $1$, one vertex of flow $-1$ and one vertex of flow zero.
\end{ex}
\begin{prop}\cite[Proposition~1]{Swan}\label{Prop:Swan}
Let $\Sigma$ be a quiver and let $i,j\in \Sigma_0$ be two of its vertices. If there is a unicursal path from $i$ to $j$ then $\Sigma$ is connected, every vertex other than $i$ and $j$ has flow zero, if $i=j$ then $i$ has flow zero, if $i\neq j$ then the flow of $i$ is $1$ and the flow of $j$ is $-1$.
 \end{prop}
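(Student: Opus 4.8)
The plan is to read off every assertion directly from the sequence of vertices visited by the unicursal path. Writing $\gamma = \alpha_{\sigma(1)}\cdots\alpha_{\sigma(m)}$, I would record the visited vertices as $v_0, v_1, \ldots, v_m$ with $v_0 = i = s(\gamma)$ and $v_m = j = t(\gamma)$, where consecutiveness of the path means $s(\alpha_{\sigma(k)}) = v_{k-1}$ and $t(\alpha_{\sigma(k)}) = v_k$ for each $k$. Since $\gamma$ contains every arrow of $\Sigma$ exactly once, this single walk traverses all of $\Sigma_1$, so the arrows together with their endpoints form a connected subquiver; as every vertex of $\Sigma$ is incident to some arrow and therefore appears among the $v_k$, the whole quiver $\Sigma$ is connected, which settles the first claim.

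For the flow conditions the key step is a telescoping count of occurrences. For a fixed vertex $v$, the out-degree of $v$ in $\Sigma$ equals the number of indices $k \in \{1, \ldots, m\}$ with $v_{k-1} = v$, that is, the number of occurrences of $v$ among $v_0, \ldots, v_{m-1}$; dually its in-degree equals the number of occurrences of $v$ among $v_1, \ldots, v_m$. (Here I would be careful with the index ranges, since this matching of degrees with occurrences in the initial and terminal segments of the list is the crux of the bookkeeping.) Letting $N(v)$ denote the total number of occurrences of $v$ in the full list $v_0, \ldots, v_m$, the out-degree is $N(v)$ minus the contribution of the terminal position and the in-degree is $N(v)$ minus the contribution of the initial position. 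Subtracting, all but the two endpoint contributions cancel, so the flow of $v$ equals $1$ if $v = v_0$ and $0$ otherwise, minus $1$ if $v = v_m$ and $0$ otherwise.

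Reading off the cases from this identity finishes the proof. A vertex distinct from both $i$ and $j$ occurs neither first nor last in the list, so its flow is zero. If $i = j$, this common vertex occurs both first and last, giving flow $1 - 1 = 0$, and all other vertices have flow zero as before. If $i \neq j$, then $i = v_0 \neq v_m$ gives flow $1 - 0 = 1$ and $j = v_m \neq v_0$ gives flow $0 - 1 = -1$.

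I expect the flow computation itself to be essentially automatic once the degrees are rewritten as occurrence counts, so the main obstacle is less in the algebra than in the two supporting observations: pinning down the connectivity argument (which quietly needs that the quiver has no isolated vertices, since otherwise the walk, connected on its support, would fail to reach them) and getting the initial and terminal index ranges exactly right in the telescoping count so that the cancellation leaves precisely the two endpoint terms.
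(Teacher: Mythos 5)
Your proof is correct: the occurrence-count/telescoping argument is the standard (indeed Euler's) proof of this fact, and it is essentially the content of Swan's Proposition~1, which the paper cites without reproducing a proof. Your parenthetical caveat about isolated vertices is well taken but harmless here, since in the paper's applications the vertex set of the Swan quiver $\Sigma_\beta$ is by construction the set of endpoints of its arrows, so no isolated vertices occur.
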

 The following result was used by Swan to give another proof of the Amitsur-Levitski Theorem~\ref{Thm:Amitsur-Levitski}. 
 \begin{thm}\cite[Theorem~2]{Swan}\label{gra1}
    Let $G$ be a quiver such that 
    \begin{equation}\label{Eq:SwanQuiver}
    \lvert G_{1}\rvert \geq 2\lvert G_{0}\rvert.
    \end{equation}
For any $i,j\in G_{0}$ the number of unicursal paths from $i$ to $j$ with sign $1$ is the same as the number of unicursal paths from $i$ to $j$ with sign $-1$. 
\end{thm}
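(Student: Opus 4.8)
The plan is to prove the statement by producing a \emph{sign-reversing, fixed-point-free involution} on the set $U_{ij}$ of unicursal paths from $i$ to $j$. Any such involution pairs each unicursal path of sign $+1$ with one of sign $-1$, so the two counts coincide. If $U_{ij}$ is empty there is nothing to prove, so I would assume a unicursal path exists. By Proposition~\ref{Prop:Swan} this pins down the flow of every vertex; in particular the number of times an Eulerian tour visits a given vertex $v$ is determined by $G$ alone (for an interior vertex it equals the in-degree of $v$, since every visit consumes one in-arrow and one out-arrow and all arrows are used exactly once, with the usual $\pm 1$ corrections at $i$ and $j$), \emph{independently} of the chosen tour.

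First I would use the hypothesis \eqref{Eq:SwanQuiver} to locate a vertex at which the involution can act, uniformly in the tour. A tour of $\lvert G_1\rvert$ arrows makes $\lvert G_1\rvert+1 \ge 2\lvert G_0\rvert+1$ vertex-visits distributed over $\lvert G_0\rvert$ vertices, so by pigeonhole some vertex is visited at least three times; and by the previous remark the multiset of visit-counts is the same for every tour, so a single vertex $v$ with visit-multiplicity $\ge 3$ can be fixed once and for all. (Equivalently, since $2\lvert G_1\rvert\ge 4\lvert G_0\rvert$, some vertex has total degree at least $4$, hence both in- and out-degree at least $2$ after imposing the flow constraints of Proposition~\ref{Prop:Swan}.)

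Next I would define the involution locally at $v$. Reading a tour $\gamma$ in traversal order, I decompose it into the initial segment from $i$ to the first arrival at $v$, the successive \emph{excursions} (the closed sub-walks based at $v$ strung between consecutive visits to $v$), and the final segment from the last departure at $v$ to $j$; the multiplicity $\ge 3$ secured above guarantees at least two blocks available to interchange. The cleanest instance is when $v$ carries two parallel arrows: interchanging their positions in the traversal order is a single transposition of $\{\alpha_1,\dots,\alpha_m\}$, it visibly yields another unicursal path, it squares to the identity, it has no fixed point, and it reverses the sign --- exactly what is required. In general I would perform the analogous interchange on a suitably chosen pair of excursions (or arrows) incident to $v$.

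The main obstacle is showing that this local interchange is genuinely sign-reversing and fixed-point-free. That the swap produces a valid unicursal path and is an involution is bookkeeping once the excursion decomposition at $v$ is in place; the delicate point is the \emph{parity} of the induced permutation of $\{\alpha_1,\dots,\alpha_m\}$. Interchanging two blocks of arrows of lengths $p$ and $q$ contributes a sign $(-1)^{pq}$, which is not automatically $-1$, so the real work is to refine the choice of blocks (for instance reducing to a genuine transposition, or selecting the excursions so that the induced permutation is odd) in a way that simultaneously leaves no tour fixed. This is precisely where \eqref{Eq:SwanQuiver} enters essentially: it is the forced multiplicity at $v$ that makes a sign-reversing interchange available for every tour at once.
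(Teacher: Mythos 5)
This is a statement the paper only cites (it is Theorem~2 of Swan's 1963 article) and does not prove, so there is no in-paper argument to compare with; I am judging your proposal on its own. Your setup is sound: reducing to a sign-reversing, fixed-point-free involution on the set of unicursal paths from $i$ to $j$ is a legitimate strategy; the visit-count of a vertex is indeed determined by its in-degree (hence is tour-independent); and the pigeonhole step correctly extracts from \eqref{Eq:SwanQuiver} a vertex $v$ visited at least three times. This is also exactly where the hypothesis must enter, since the bound is sharp: for $\lvert G_1\rvert = 2\lvert G_0\rvert-1$ the conclusion can fail, which is why $St_{2n-1}$ is not an identity of $M_n(F)$.

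However, there is a genuine gap precisely at the point you flag yourself, and it is not a routine refinement. Interchanging two excursions of lengths $p$ and $q$ based at $v$ multiplies the sign by $(-1)^{pq}$, which equals $+1$ whenever either excursion has even length; so the proposed interchange is simply not sign-reversing in general, a tour may admit only even-length excursions at the chosen $v$, and parallel arrows need not exist. Moreover, even if for each individual tour some odd interchange could be found, the assignment must be made \emph{coherently} so that the map squares to the identity: if $\gamma\mapsto\gamma'$ uses one pair of excursions while $\gamma'\mapsto\gamma''$ uses another, you do not obtain an involution and the pairing argument collapses. These two issues --- parity and coherence --- are the entire content of the theorem. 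Swan's actual proof does not construct a global involution; it argues by induction on the graph, reducing via edge deletions and vertex splittings to the situation of parallel arrows, where the single-transposition argument you describe does apply. As written, your argument establishes the statement only in the special case where the distinguished vertex carries parallel arrows, and leaves the general case open.
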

We notice that if a quiver $\Sigma$ admits a unicursal path then it is \emph{path-connected} in the following sense: for every two distinct vertices $i,j\in\Sigma_0$ either there is a path from $i$ to $j$ or from $j$ to $i$. Moreover, if every vertex of $\Sigma$ has flow zero, then $\Sigma$ is \emph{strongly path-connected}, i.e. for every two vertices $i,j\in\Sigma_0$ there is a path from $i$ to $j$ and a path from $j$ to $i$. 

We will need the following lemma.
\begin{lemma}\label{Lemma:DisconnectingArrows}
Let $\Sigma$ be a  quiver admitting a unicursal path. If there exists an arrow $\alpha\in \Sigma_1$ which does not lie in an oriented cycle, then $\Sigma\setminus \{\alpha\}$ is disconnected.
\end{lemma}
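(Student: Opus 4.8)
The plan is to split the given unicursal path at the arrow $\alpha$ and to show that the two resulting halves live on disjoint vertex sets, so that deleting $\alpha$ leaves no edge bridging them. Write $a\coloneqq s(\alpha)$ and $b\coloneqq t(\alpha)$. First I would record two consequences of the hypothesis that $\alpha$ lies in no oriented cycle: that $a\neq b$ (a loop would be a $1$-cycle through $\alpha$), and, more importantly, that there is \emph{no} oriented path from $b$ to $a$ in $\Sigma$. Indeed, concatenating $\alpha$ with such a path would produce a closed path whose support is an oriented cycle containing $\alpha$, contradicting the hypothesis.

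Next I would exploit the unicursal path $\gamma$. Since $\gamma$ traverses every arrow exactly once, it traverses $\alpha$ exactly once, so I can factor $\gamma=\gamma_1\,\alpha\,\gamma_2$, where $\gamma_1$ is a walk ending at $a$ and $\gamma_2$ is a walk starting at $b$. Let $V_1$ and $V_2$ be the sets of vertices visited by $\gamma_1$ and by $\gamma_2$ respectively, and let $A$ and $B$ be the corresponding arrow sets; then $\{A,B,\{\alpha\}\}$ partitions $\Sigma_1$, every arrow of $A$ has both endpoints in $V_1$, and every arrow of $B$ has both endpoints in $V_2$. Because $\Sigma$ admits a unicursal path it is connected by Proposition~\ref{Prop:Swan}, hence has no isolated vertices and every vertex lies on $\gamma$; thus $V_1\cup V_2=\Sigma_0$, with $a\in V_1$ and $b\in V_2$.

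The crux is to prove $V_1\cap V_2=\varnothing$. Suppose $v\in V_1\cap V_2$. Following $\gamma_1$ from its visit to $v$ up to its end gives an oriented path from $v$ to $a$, while following $\gamma_2$ from its start at $b$ up to its visit to $v$ gives an oriented path from $b$ to $v$; concatenating yields an oriented path from $b$ to $a$, contradicting the first step. Hence $V_1$ and $V_2$ are disjoint, both nonempty (as $a\neq b$), and together cover $\Sigma_0$. Since in $\Sigma\setminus\{\alpha\}$ every remaining arrow lies entirely within $V_1$ or entirely within $V_2$, there is no edge joining these two classes, so $\Sigma\setminus\{\alpha\}$ is disconnected. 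The main obstacle is precisely this disjointness argument: it is where the absence of a return path from $b$ to $a$ (the translation of the acyclicity of $\alpha$) meets the combinatorics of the Euler tour; once $V_1\cap V_2=\varnothing$ is established, the topological conclusion is immediate.
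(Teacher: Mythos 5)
Your proposal is correct and follows essentially the same route as the paper: both split the unicursal path as $\gamma_1\alpha\gamma_2$, and both derive a contradiction from a common vertex $v$ of the two halves by following the tail of $\gamma_1$ from $v$ to $s(\alpha)$ and the head of $\gamma_2$ from $t(\alpha)$ to $v$, producing an oriented cycle through $\alpha$. Your extra bookkeeping (the vertex sets $V_1,V_2$ covering $\Sigma_0$ and the observation that every remaining arrow stays inside one class) just makes explicit what the paper summarizes as $\Sigma\setminus\{\alpha\}=\Sigma(1)\sqcup\Sigma(2)$.
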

\begin{proof}
Let $\pi=\gamma_1\alpha\gamma_2$ be a unicursal path in $\Sigma$. Let $\Sigma(1)$ and $\Sigma(2)$ denote the support of $\gamma_{1}$ and  $\gamma_{2}$, respectively. We claim that $\Sigma\setminus\{\alpha\}=\Sigma(1)\sqcup \Sigma(2)$.  
Suppose, for a contradiction, that  there exists a vertex $k$ in common between $\Sigma(1)$ and $\Sigma(2)$. We notice that $\gamma_{1}$ and $\gamma_{2}$ are unicursal paths in $\Sigma(1)$ and $\Sigma(2)$, respectively. Since $s(\alpha)$ is a vertex of $\Sigma(1)$, there exists a path $p_{1}$ in $\Sigma(1)$ from $k$ to $s(\alpha)$. Since $t(\alpha)$ is a vertex of $\Sigma(2)$, there exists a path $p_2$ in $\Sigma(2)$ from $t(\alpha)$ to $k$. It follows that $p_1 \alpha p_2$ is an oriented cycle containing $\alpha$, against the hypothesis.
\end{proof}

We will use several times the following definition, implicitly used in \cite{Swan}.
\begin{defi}\label{Def:SwanQuiver}
Let $A$ be an algebra which,  as a vector space, admits a decomposition of the form $A=\bigoplus_{i,j=1}^nA_{i,j}$ such that $A_{ij}A_{jk}\subseteq A_{i,\ell}$ and $A_{ij}A_{k\ell}$ is zero if $j\neq k$.  Let $\beta=(p_1,\dots p_m)\subset A$ be a finite subset of  elements of $A$ such that $p_1p_2\cdots p_m\neq0$ and for every $h$ there exists two indices $i_h$ and $j_h$ such that $p_h\in A_{i_hj_h}$. The \emph{Swan quiver} of $\beta$ is the quiver $\Sigma_\beta$ whose set of vertices is $\{i_h, j_h\mid h=1,\dots, m\}$ and whose arrows are $p_h: i_h\rightarrow j_h$.
\end{defi}
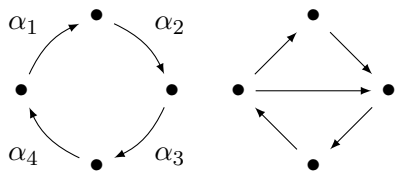
\begin{figure}
\centering
\begin{tabular}{cc}
\begin{tikzpicture}[>=latex]
\node (1) at (0,0) {$\bullet$};
\node (2) at (1,1) {$\bullet$};
\node (3) at (2,0) {$\bullet$};
\node (4) at (1,-1) {$\bullet$};
\draw[->] (1) to[bend left=20] node[above left] {$\alpha_1$} (2);
\draw[->] (2) to[bend left=20] node[above right] {$\alpha_2$} (3);
\draw[->] (3) to[bend left=20] node[below right] {$\alpha_3$} (4);
\draw[->] (4) to[bend left=20] node[below left] {$\alpha_4$} (1);
\end{tikzpicture}
&
\begin{tikzpicture}[>=latex]
\node (1) at (0,0) {$\bullet$};
\node (2) at (1,1) {$\bullet$};
\node (3) at (2,0) {$\bullet$};
\node (4) at (1,-1) {$\bullet$};
\draw[->] (1)--(2); 
\draw[->] (2) --(3);
\draw[->] (3) --(4);
\draw[->] (4) --(1);
\draw[->] (1) --(3);
\end{tikzpicture}
\end{tabular}
\caption{An example of a quiver and of a Swan quiver}\label{Fig:QuiverSwan}
\end{figure}

\begin{ex}
Let us consider the quiver $Q$ shown on the left of Figure~\ref{Fig:QuiverSwan}. Let $\beta=(\alpha_1, \alpha_2, \alpha_3, \alpha_4, \alpha_1\alpha_2)$. The Swan quiver $\Sigma_\beta$ is shown on the right of Figure \ref{Fig:QuiverSwan}.
\end{ex}

\section{Locally A-graded algebras}\label{Sec:LocAgraded}
In this section we introduce the notion of \emph{locally A-graded algebras} and prove that they admit a polynomial identity.

\begin{defi}\label{Def:LocAGraded}
Given a positive integer $n$, a \emph{locally A-graded} algebra of size $n$, is a $\Z$-graded $F$-algebra $A=\bigoplus_{d\in\Z }A^{d}$ with the property that each homogeneous vector subspace is a direct sum $A^d=\bigoplus_{1\le i,j\le n}A_{i,j}^d$ such that

\begin{align}\label{Def:AGraded1}
&\textrm{dim}(A_{i,j}^d)\le 1,
\\\label{Def:AGraded2}
&\textrm{there exists an homogenous linear basis }\mathcal{B}=\{e_{ij}^{(d)}\}\textrm{ of }A \\\notag &\textrm{which is matrix-like, i.e. }e_{ij}^{(d)}\in A_{i,j}^d, \textrm{ and}
\end{align}
\[
        e_{ij}^{(d_1)} e_{k\ell}^{(d_2)}=\begin{cases}
        \delta_{j,k}  e_{i,\ell}^{(d_1+d_2)} &\textrm{if } A_{i,\ell}^{d_1+d_2}\not =0,\\
        0 &\text{otherwise},
         \end{cases}
 \]
 where $\delta_{jk}$ is the Kronecker delta. We say that $\mathcal{B}$ is the \emph{standard basis} of $A$.
\end{defi}
\begin{rem}
If $A$ is a  locally A-graded algebra such that $A_{i,j}=\bigoplus_{t=0}^dA_{i,j}^t$, and $A_{i,j}^t\neq 0$ for all $0\le t\le d$, for some $i$ and $j$, then $A_{i,j}$ is $A$-graded in the sense  of Arnold \cite{Arnold}. This explains the choice of terminology. 
\end{rem}
\begin{ex}\label{Ex:LocAgr}
Let $Q$ be the quiver 
\[
\begin{tikzpicture}[>=latex]
\node (1) at (0,0) {$\bullet$};
\node (2) at (1,1) {$\bullet$};
\node (3) at (2,0) {$\bullet$};
\node (4) at (1,-1) {$\bullet$};
\draw[->] (1) to[bend left=20] node[above left] {$\alpha_1$} (2);
\draw[->] (2) to[bend left=20] node[above right] {$\alpha_2$} (3);
\draw[->] (1) to[bend right=20] node[below left] {$\alpha_3$} (4);
\draw[->] (4) to[bend right=20] node[below right] {$\alpha_4$} (3);
\end{tikzpicture}
\]
The bound quiver algebra $A=FQ/\langle\alpha_1\alpha_2-\alpha_3\alpha_4\rangle$ (also called the commutative square algebra)
is locally A-graded. 
\end{ex}
\begin{nex}
The path algebra $FQ$ of the quiver in the previous example is not locally A-graded (with respect to the standard basis) since there are two different paths with the same endpoints and of the same length.
\end{nex}
\begin{ex}
The algebra $M_n(F)$  is locally A-graded with $\mathcal{B}=\{e_{ij}\}$.
\end{ex}

\begin{ex}
The path algebra  of a quiver $T$ whose underlying graph is a tree is locally A-graded. Indeed, given two vertices there is at most one path between them. In this case, the stronger condition $\mathrm{dim}\,e_{i}FT e_{j}\le 1$ holds.
\end{ex}
\begin{lemma}\label{Lem:Cn}
The path algebra of $C_{n}$ is locally A-graded. 
\end{lemma}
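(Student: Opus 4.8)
The plan is to exhibit an explicit matrix-like standard basis for the path algebra $FC_n$ that witnesses Definition~\ref{Def:LocAGraded}. Recall that an oriented cycle $C_n$ has vertices $\{1,\dots,n\}$ and arrows $\alpha_i\colon i\to i+1$ (indices read modulo $n$, so $\alpha_n\colon n\to 1$). A path in $C_n$ is completely determined by its starting vertex and its length, since at each vertex there is exactly one outgoing arrow. Thus the basis of $FC_n$ is the set of all paths $p_{i}^{(d)}$ of length $d\ge 0$ starting at vertex $i$; such a path ends at the vertex $j = i+d \pmod n$. I would therefore define $e_{ij}^{(d)}$ to be this unique path of length $d$ from $i$ to $j$ whenever $j\equiv i+d\pmod n$, and $0$ otherwise, and take $\mathcal{B}=\{e_{ij}^{(d)}\}$. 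The $\Z$-grading $FC_n=\bigoplus_{d\ge 0}FC_n^{d}$ is the usual length grading already recorded in Section~\ref{Sec:Quivers}, and the refinement $FC_n^d=\bigoplus_{i,j}FC_n^d\cap e_iFC_ne_j$ is exactly decomposition~\eqref{Eq:FQLocAGraded}.

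Next I would verify the three conditions in turn. For~\eqref{Def:AGraded1}, $\dim(A_{i,j}^d)\le 1$ holds because, as noted, there is at most one path of a given length from $i$ to $j$ (it exists precisely when $j\equiv i+d\pmod n$), so each homogeneous component $FC_n^d\cap e_iFC_ne_j$ is at most one-dimensional and spanned by $e_{ij}^{(d)}$. Condition~\eqref{Def:AGraded2} is then immediate: the $e_{ij}^{(d)}$ are by construction homogeneous, lie in $A_{i,j}^d$, and form a linear basis of $FC_n$ since they enumerate all paths. The heart of the verification is the multiplication rule. Here I would simply compute the product of two paths using the path-algebra multiplication from Section~\ref{Sec:Quivers}: $e_{ij}^{(d_1)}e_{k\ell}^{(d_2)}$ is the concatenation when $j=k$ and zero otherwise, which gives the Kronecker-$\delta_{j,k}$ factor. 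When $j=k$, concatenating the length-$d_1$ path from $i$ to $j$ with the length-$d_2$ path from $j$ to $\ell$ produces the length-$(d_1+d_2)$ path from $i$ to $\ell$, i.e. $e_{i\ell}^{(d_1+d_2)}$; and this product lands in a nonzero component exactly when $\ell\equiv i+(d_1+d_2)\pmod n$, matching the case split ``$A_{i,\ell}^{d_1+d_2}\ne 0$'' in the definition.

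The one subtlety, and the only place that requires care rather than bookkeeping, is the compatibility of the endpoint-congruences: I must check that when $j=k$ holds (so $j\equiv i+d_1$ and $\ell\equiv k+d_2=j+d_2$), the target index satisfies $\ell\equiv i+d_1+d_2\pmod n$ automatically, so that the nonvanishing condition on the product is genuinely governed by whether $A_{i,\ell}^{d_1+d_2}\ne 0$ and never produces a spurious disagreement. This is a one-line congruence chain, but it is exactly the step that uses the cyclic structure and distinguishes $C_n$ from a general quiver (where two paths of equal length and equal endpoints can coexist, as the Non-example after Example~\ref{Ex:LocAgr} shows). Once this congruence is recorded, Definition~\ref{Def:LocAGraded} is satisfied and $FC_n$ is locally A-graded. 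I anticipate no real obstacle; the proof is essentially the observation that uniqueness of paths by length makes the length-plus-startpoint labeling of the standard basis manifestly matrix-like.
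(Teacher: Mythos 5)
Your proposal is correct and rests on exactly the same observation as the paper's (one-line) proof, namely that in $C_n$ there is at most one path of a given length $d$ from a vertex $i$ to a vertex $j$; you simply spell out the resulting matrix-like basis and the multiplication rule in full detail. No issues.
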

\begin{proof}
Given two vertices $i$ and $j$ of $C_n$, there is at most one path of a given length $d$ starting in $i$ and ending in $j$. 
\end{proof}
\begin{nex}\label{Nex:PathAlgNotLocAgr}
The path algebra of the  quiver 
\begin{equation}\label{Eq:QuiverTwoLoops}
\begin{tikzpicture}[>=latex]
\node (1) at (0,0) {$\bullet$};
\node (2) at (1,0) {$\bullet$};
\draw[->] (1) to[out=160, in=200, loop] node[left] {$\alpha_1$} (1);
\draw[->] (1) to node[above] {$\alpha_2$} (2);
\draw[->] (2) to[out=20, in=-20, loop] node[right] {$\alpha_3$} (2);
\end{tikzpicture}
\end{equation}
is not locally A-graded. Indeed, the two distinct paths $\alpha_1\alpha_1\alpha_2\alpha_3$ and $\alpha_1\alpha_2\alpha_3\alpha_3$ have the same endpoints and length.
\end{nex}
\begin{oq}
It would be interesting to characterize quivers whose path algebra is locally A-graded. 
\end{oq}
The next lemma provides more examples of locally A-graded algebras.
\begin{lemma}\label{Lem:SubalgebraLocAGraded}
Let $A$ be a locally A-graded algebra with standard basis $\mathcal{B}$. If $B\subset A$ is a subalgebra of $A$ spanned by a part of $\mathcal{B}$ then $B$ is locally A-graded. Dually, if $I\subset A$ is a two-sided ideal spanned by a part of $\mathcal{B}$ then $A/I$ is locally A-graded. 
\end{lemma}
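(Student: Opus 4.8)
The plan is to show that in both cases the entire locally A-graded structure is \emph{inherited} from $A$ along the standard basis, so that essentially nothing must be built from scratch: the grading, the decomposition into homogeneous matrix-like pieces of dimension at most one, and the matrix-like basis all restrict (respectively descend) automatically. The only genuine verification is that the piecewise multiplication rule of Definition~\ref{Def:LocAGraded} persists once its ``nonzero'' clause is reinterpreted inside $B$ (respectively $A/I$); in both cases I keep the same size $n$ and the same index set $\{1,\dots,n\}$.

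For the subalgebra $B$, spanned by a subset $\mathcal{B}_B\subseteq\mathcal{B}$, I would first observe that since $\mathcal{B}_B$ consists of homogeneous elements, $B$ is a graded subspace, $B=\bigoplus_d(B\cap A^d)$, hence a $\Z$-graded subalgebra. I then set $B_{i,j}^d$ to be $B\cap A_{i,j}^d$: this is spanned by $e_{ij}^{(d)}$ if $e_{ij}^{(d)}\in\mathcal{B}_B$ and is zero otherwise, so condition~\eqref{Def:AGraded1} holds, and taking $\mathcal{B}_B$ itself as the candidate matrix-like basis gives \eqref{Def:AGraded2} at once. For the product rule, let $e_{ij}^{(d_1)},e_{k\ell}^{(d_2)}\in\mathcal{B}_B$; in $A$ their product is either $0$ or the basis vector $\delta_{j,k}e_{i,\ell}^{(d_1+d_2)}$. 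The key use of the hypothesis is that $B$ is closed under multiplication: whenever this product equals the nonzero element $e_{i,\ell}^{(d_1+d_2)}$, that element lies in $B$, hence in $\mathcal{B}_B$, so $B_{i,\ell}^{d_1+d_2}\neq0$; and if $B_{i,\ell}^{d_1+d_2}=0$ the product is forced to vanish. This is precisely the required formula with $A$ replaced by $B$.

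For the quotient, with $I$ spanned by a subset $\mathcal{B}_I\subseteq\mathcal{B}$, the same homogeneity observation shows that $I$ is a graded two-sided ideal, so $A/I$ is $\Z$-graded. A basis of $A/I$ is given by the images $\overline{e}_{ij}^{(d)}$ of the surviving basis elements $e_{ij}^{(d)}\in\mathcal{B}\setminus\mathcal{B}_I$ (these images are linearly independent precisely because $I$ is spanned by a part of $\mathcal{B}$), and I would set the $(i,j)$-piece in degree $d$ to be the image of $A_{i,j}^d$, which is one-dimensional when $e_{ij}^{(d)}\notin\mathcal{B}_I$ and zero otherwise; thus~\eqref{Def:AGraded1} holds. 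The multiplication rule now descends directly: for two surviving basis vectors the product is the image of $\delta_{j,k}e_{i,\ell}^{(d_1+d_2)}$, which equals $\delta_{j,k}\overline{e}_{i,\ell}^{(d_1+d_2)}$ and vanishes exactly when $A_{i,\ell}^{d_1+d_2}=0$ or $e_{i,\ell}^{(d_1+d_2)}\in\mathcal{B}_I$, i.e.\ exactly when the corresponding $(i,\ell)$-piece of $A/I$ in degree $d_1+d_2$ is zero.

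I expect no serious obstacle: the statement is essentially bookkeeping. The single point that requires care — and the only place the hypotheses genuinely enter — is matching the case distinction of the product rule to the new homogeneous pieces. For the subalgebra this forces an appeal to closure under products, guaranteeing that a nonzero product of surviving basis elements is again surviving, so that ``$A_{i,\ell}^{d_1+d_2}\neq0$'' may be safely replaced by ``$B_{i,\ell}^{d_1+d_2}\neq0$''; for the quotient the ideal property ensures the product descends, and the replacement of the clause by the analogous one for $A/I$ is automatic.
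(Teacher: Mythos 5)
Your proof is correct and takes the same route as the paper, whose entire proof is the one line ``It follows immediately from the definition''; you have simply carried out in detail the verification the paper leaves to the reader. The one point genuinely worth the care you gave it --- that closure of $B$ under multiplication forces a nonzero product of two surviving basis vectors to be again a surviving basis vector, so the ``nonzero'' clause of the product rule may be reread inside $B$ --- is handled correctly.
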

\begin{proof}
It follows immediately from the definition.
\end{proof}
\begin{coro}\label{Cor:QuotientPathLocGraded}
Let $Q$ be a quiver whose path algebra $A=FQ$ is locally A-graded. Then, for every monomial ideal $I\subset FQ$, the quiver algebra $A/I$ is locally A-graded.
\end{coro}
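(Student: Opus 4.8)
The plan is to reduce the statement to the dual part of Lemma~\ref{Lem:SubalgebraLocAGraded}, which already guarantees that the quotient of a locally A-graded algebra by a two-sided ideal spanned by a part of its standard basis is again locally A-graded. The only point to check is therefore that a monomial ideal of $FQ$ is spanned by a subset of the standard basis $\mathcal{B}$ of $A=FQ$ regarded as a locally A-graded algebra.

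First I would identify $\mathcal{B}$ explicitly. By the decomposition \eqref{Eq:FQLocAGraded} the natural locally A-graded structure on $FQ$ is given by $A^{d}=FQ^{d}$ and $A^{d}_{i,j}=FQ^{d}_{i,j}$, the span of paths of length $d$ from $i$ to $j$. The hypothesis that $FQ$ is locally A-graded forces $\dim FQ^{d}_{i,j}\le 1$, so each nonzero homogeneous component is spanned by a single path; consequently $\mathcal{B}$ is, up to scalars, exactly the set of all paths in $Q$, i.e. the standard basis of the path algebra, and this is the matrix-like basis of Definition~\ref{Def:LocAGraded}.

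Next I would verify that a monomial ideal $I$ is spanned by paths. Let $I$ be generated by the paths $w_{1},\dots,w_{r}$. Since the product of two paths in $FQ$ is either a path or zero, the two-sided ideal generated by the $w_{\ell}$ is, as a vector space, spanned by all elements of the form $u\,w_{\ell}\,v$ with $u,v$ paths (including lazy paths), that is, by all paths that contain some $w_{\ell}$ as a factor. Each such element lies in $\mathcal{B}$, so $I$ is spanned by a part of $\mathcal{B}$.

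Finally, applying the dual statement of Lemma~\ref{Lem:SubalgebraLocAGraded} to this $I$ yields that $A/I$ is locally A-graded, which is precisely the claim. The main, and essentially only non-formal, obstacle is the second step, the verification that a monomial ideal is spanned by paths; this rests on the closure of the path basis under multiplication up to zero and amounts to the standard combinatorial description of monomial ideals in path algebras.
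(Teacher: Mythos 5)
Your proposal is correct and follows the same route as the paper: the paper's proof simply observes that a monomial ideal is generated by paths and invokes the dual part of Lemma~\ref{Lem:SubalgebraLocAGraded}. You merely spell out the two details the paper leaves implicit (that the matrix-like basis of $FQ$ is the path basis, and that the two-sided ideal generated by paths is spanned by paths), which is a reasonable elaboration rather than a different argument.
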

\begin{proof}
By hypothesis, $I$ is generated by paths in $Q$. The result is hence a consequence of Lemma~\ref{Lem:SubalgebraLocAGraded}.
\end{proof}
\begin{ex}
Given $n\ge 2$, the algebra $UT_n(F)$ of upper triangular $n\times n$ matrices is locally A-graded. In particular, the path algebra of $\overrightarrow{A\,}_n$ is locally A-graded.
\end{ex}
\begin{ex}
The algebra of  nilpotent upper triangular matrices is locally A-graded.
\end{ex}
\begin{prop}
Every basic Nakayama algebra is locally A-graded. 
\end{prop}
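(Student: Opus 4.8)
The plan is to combine the classical classification of Nakayama algebras with Corollary~\ref{Cor:QuotientPathLocGraded}. A basic Nakayama algebra is a finite direct product of connected ones, and a finite direct product of locally A-graded algebras is again locally A-graded: one indexes the factors by disjoint blocks of indices, declares every component joining two distinct blocks to be zero, and takes the union of the standard bases. The matrix-like multiplication of Definition~\ref{Def:LocAGraded} is preserved, since a product of two basis elements lying in different blocks lands in a zero component. It therefore suffices to treat a connected basic Nakayama algebra $A$. By the classification (see e.g.\ \cite{ASS}), $A\cong FQ/I$ with $Q$ either $\overrightarrow{A\,}_n$ or the oriented cycle $C_n$ and $I$ an admissible ideal. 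In both cases $FQ$ is locally A-graded: by Lemma~\ref{Lem:AnUpperTriangular} (together with the fact that $UT_n(F)$ is locally A-graded) for $\overrightarrow{A\,}_n$, and by Lemma~\ref{Lem:Cn} for $C_n$. Hence, by Corollary~\ref{Cor:QuotientPathLocGraded}, it is enough to show that $I$ can be taken to be a monomial ideal.

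For $Q=\overrightarrow{A\,}_n$ this is immediate: between any two vertices there is at most one path, so writing $I=\bigoplus_{i,j}e_iIe_j$ (valid because $I$ is a two-sided ideal and the $e_i$ are orthogonal idempotents summing to $1$) already expresses $I$ as a sum of one-dimensional pieces, each spanned by a single path. The cyclic case is the only real point. Fixing the orientation $\alpha_i\colon i\to i+1$ of $C_n$, the subalgebra $e_iFQe_i$ is the polynomial ring $F[c_i]$ on the closed path $c_i$ of length $n$ based at $i$, and $e_iFQe_j$ is a free $F[c_i]$-module of rank one generated by the unique shortest path $p_{ij}$ from $i$ to $j$ (with $p_{ii}=e_i$); indeed every path from $i$ to $j$ equals $c_i^m p_{ij}$ for a unique $m\ge 0$. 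The summand $e_iIe_j$ is then an $F[c_i]$-submodule, hence of the form $e_iIe_j=J\,p_{ij}$ for an ideal $J\subseteq F[c_i]$. Admissibility gives $\mathfrak m^k\subseteq I$ for some $k$, so $J$ contains $c_i^m$ for all large $m$; since an ideal of the one-variable polynomial ring $F[c_i]$ that contains a power of $c_i$ must be of the form $(c_i^{a})$, we conclude that each $e_iIe_j$ is spanned by paths, and therefore $I$ is monomial.

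With $I$ monomial, Corollary~\ref{Cor:QuotientPathLocGraded} applies and $A=FQ/I$ is locally A-graded, completing the connected case and hence the proof. I expect the main obstacle to be precisely the second paragraph: a priori an admissible ideal of $FC_n$ could contain relations mixing paths of \emph{different} lengths between the same pair of vertices, which would destroy the ``at most one basis element per graded component'' requirement of a matrix-like basis. The observation that removes this obstacle is that admissibility forces each endpoint-component $e_iIe_j$, once identified with an ideal of $F[c_i]$, to be generated by a single monomial $c_i^a$, so that the relations are genuinely monomial and the fine grading of $FQ$ descends to $A$.
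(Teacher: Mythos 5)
Your proof is correct and follows essentially the same route as the paper: reduce via the classification of basic Nakayama algebras to $FQ/I$ with $Q=\overrightarrow{A\,}_n$ or $C_n$, observe that $I$ is monomial, and invoke Corollary~\ref{Cor:QuotientPathLocGraded}. The only difference is that the paper simply asserts ``it is known that $I$ is a monomial ideal,'' whereas you supply a proof of that fact (via the $F[c_i]$-module structure of $e_iFQe_j$) and also spell out the reduction to the connected case; both additions are sound.
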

\begin{proof}
Let $A$ be a basic Nakayama algebra. By \cite[Theorem~V.3.2]{ASS} there is an isomorphism $A\cong FQ/I$ where $Q$ is either $\overrightarrow{A\,}_n$ or $C_n$ for some $n$ and $I$ is an admissible ideal (in particular $A$ is finite dimensional).
It is known  that $I$ is a monomial ideal and hence we conclude by corollary~\ref{Cor:QuotientPathLocGraded}. 
\end{proof}
The following is our first main result.
\begin{thm}\label{teo bis}
Let $A$ be a locally A-graded algebra with $n$ idempotents. Then $\operatorname{St}_{2n}$ is a polynomial identity for $A$.
\end{thm}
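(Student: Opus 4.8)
The plan is to evaluate $\operatorname{St}_{2n}$ on the standard basis $\mathcal{B}=\{e_{ij}^{(d)}\}$ and to translate the computation into a signed count of Eulerian tours, to which Swan's Theorem~\ref{gra1} applies. Since $\operatorname{St}_{2n}$ is multilinear, it is a polynomial identity for $A$ if and only if it vanishes on every $2n$-tuple of basis elements; and since $\operatorname{St}_{2n}$ is alternating, I may assume these basis elements $b_1=e_{i_1 j_1}^{(d_1)},\dots,b_{2n}=e_{i_{2n} j_{2n}}^{(d_{2n})}$ are pairwise distinct, as the evaluation is otherwise already zero. Fixing such a tuple $\beta=(b_1,\dots,b_{2n})$, I would form its Swan quiver $\Sigma_\beta$ in the sense of Definition~\ref{Def:SwanQuiver}, whose $2n$ arrows sit on at most $n$ vertices, so that the hypothesis \eqref{Eq:SwanQuiver} of Theorem~\ref{gra1} is satisfied.

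Next I would analyse a single monomial $b_{\sigma(1)}\cdots b_{\sigma(2n)}$ for $\sigma\in S_{2n}$. By the multiplication rule of Definition~\ref{Def:LocAGraded}, such a product is either zero or a single basis element; when it is nonzero the consecutive factors compose, so the sequence $b_{\sigma(1)},\dots,b_{\sigma(2n)}$ is a unicursal path in $\Sigma_\beta$, and the product equals $e_{ij}^{(D)}$, where $i$ and $j$ are the starting and terminal vertices of that path and $D=d_1+\cdots+d_{2n}$ is the (fixed) total degree. In particular, all nonzero monomials sharing the endpoints $(i,j)$ yield the very same basis element. Grouping the defining sum of $\operatorname{St}_{2n}$ accordingly, I obtain $\operatorname{St}_{2n}(\beta)=\sum_{i,j} c_{ij}\,e_{ij}^{(D)}$, where $c_{ij}=\sum_\gamma \operatorname{sgn}(\gamma)$ is the signed count of those unicursal paths $\gamma$ from $i$ to $j$ whose associated product is nonzero.

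The concluding step is to show that each $c_{ij}=0$. Theorem~\ref{gra1} gives exactly this for the signed count of \emph{all} unicursal paths from $i$ to $j$, so the heart of the matter ---  and the step I expect to be the main obstacle --- is to reconcile this full combinatorial count with $c_{ij}$, which a priori ranges only over the paths whose product does not vanish. The difficulty is that a sequence may be a genuine unicursal path of $\Sigma_\beta$ (its arrows compose combinatorially) and yet have a vanishing product, because some slot $A_{i,k}^{e}$ met along the way is zero; such a ``defective'' path contributes $0$ to $\operatorname{St}_{2n}(\beta)$ but $\pm 1$ to Swan's count. The tool I would use to control this is the precise multiplication law: a composable product of basis elements is zero exactly when its target slot is zero, and a nonzero product is the unique basis element determined by its endpoints and total degree.

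I would leverage this law, together with the flow constraints of Proposition~\ref{Prop:Swan} that every unicursal path in $\Sigma_\beta$ must satisfy, to argue that the defective tours cannot affect $c_{ij}$ --- either because, for a fixed nonzero target $(i,j)$, they cannot occur at all, or because they organize into sign-cancelling families --- so that $c_{ij}$ coincides with the full Swan count and therefore vanishes. Once this reconciliation is in place, $\operatorname{St}_{2n}(\beta)=0$ for every admissible $\beta$, which by multilinearity proves that $\operatorname{St}_{2n}\in\id{A}$.
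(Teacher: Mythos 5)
Your proposal follows the paper's proof step for step --- reduction via multilinearity and alternation to pairwise distinct standard basis elements, formation of the Swan quiver $\Sigma_\beta$, verification of \eqref{Eq:SwanQuiver}, identification of the nonzero monomials with unicursal paths landing in the slot $A_{i,j}^{D}$, and the appeal to Theorem~\ref{gra1} --- so the architecture is exactly the paper's. The problem is that the one step you yourself single out as ``the heart of the matter'' is precisely the step you do not carry out: you observe, correctly, that Theorem~\ref{gra1} kills the signed count over \emph{all} unicursal paths from $i$ to $j$, whereas your coefficient $c_{ij}$ ranges only over those tours whose product in $A$ is nonzero, and you then merely announce that you ``would argue'' the two counts agree, offering two alternative strategies (defective tours cannot occur, or they cancel among themselves) without establishing either. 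As written this is a plan, not a proof, and it is incomplete at the only point of the argument that is not formal bookkeeping.

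For comparison, the paper closes this gap by asserting that any two unicursal paths $\gamma,\gamma'$ from $i$ to $j$ are \emph{equal as elements of $A$}: both products lie in the slot $A_{i,j}^{D}$, which has dimension at most one, and by the multiplication law of Definition~\ref{Def:LocAGraded} a nonzero composable product of standard basis elements is forced to equal the basis element $e_{ij}^{(D)}$ of its target slot. Granting that, one never needs to separate defective from non-defective tours: writing $U_{ij}$ for the set of all unicursal paths from $i$ to $j$, the contribution of the endpoint pair $(i,j)$ is $\bigl(\sum_{\gamma\in U_{ij}}\sgn(\gamma)\bigr)\gamma_0$ for a single common value $\gamma_0\in\{0,\,e_{ij}^{(D)}\}$, and Theorem~\ref{gra1} annihilates the scalar. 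So the claim you still owe is exactly your first alternative --- for a fixed pair $(i,j)$, a tour with vanishing product and one with nonvanishing product cannot coexist; it rests on associativity combined with the rigidity of the multiplication law, which makes vanishing of a composable product equivalent to the vanishing of the target slots of its consecutive subproducts. Identifying the obstacle is good, but until you prove this (or otherwise show $c_{ij}=0$) the proposal does not yet constitute a proof.
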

\begin{proof}
Let $A$ be a locally $A$-graded algebra of size $n$. Since $\operatorname{St_{2n}}$ is multilinear and alternating, we need only to check evaluations at pairwise different elements of the multiplicative basis $\mathcal B\coloneq \{e_{ij}^{(d)}\}$ of $A$. Let $\beta=\{\beta_1,\ldots, \beta_{2n}\}\subset \mathcal{B}$ be such a choice.  The Swan quiver $\Sigma_\beta$ of $\beta$ (see definition~\ref{Def:SwanQuiver}) satisfies \eqref{Eq:SwanQuiver}: Indeed, $\lvert(\Sigma_\beta)_0\rvert\le n$ and $\lvert(\Sigma_\beta)_1\rvert=\lvert\beta\rvert=2n$.

For every $\sigma\in S_{2n}$, the element $\gamma=\beta_{\sigma(1)}\beta_{\sigma(2)}\cdots\beta_{\sigma(2n)}$ belongs to a graded piece $A_{i,j}^d$ where  $d=deg(\beta_1)+\ldots+deg(\beta_{2n})$ and $i$ and $j$ are suitable vertices of $\Gamma_{\beta}$. Such an element $\gamma$ corresponds to a unicursal path from $i$ to $j$ in $\Gamma_{\beta}$. 
By Theorem \ref{gra1}, there is some unicursal path $\gamma'$ 
in $G_{\beta}$ from $i$ to $j$ with $\sgn(\gamma')=-\sgn(\gamma)$. 
Since $A$ is locally A-graded, $\gamma=\gamma'$ as elements of $A$ and hence they cancel each other out in the expansion of $\St(\beta_1, \ldots, \beta_{2n})$.
\end{proof}
As a corollary we find new examples of PI-algebras.
\begin{coro}\label{Cor:FCn}
    The path algebra of the $n$-cycle $C_n$ satisfies $St_{2n}$. In particular, $FC_n$ is a PI-algebra. 
\end{coro}

\begin{rem}
The minimal degree of a polynomial identity on $F C_n$ is $2n$. Indeed, there are $2n-1$ elements that have a unique non-zero product. More precisely, the Swan quiver $\Sigma_\beta$ of the set $\beta=(C_n)_0\cup (C_n)_1\setminus \{\alpha_n\}$ of all primitive idempotents  and of all arrows except one has a unique unicursal path. 
\end{rem}

By definition, Nakayama algebras are finite dimensional and thus PI. The following immediate corollary of Theorem~\ref{teo bis} gives a polynomial identity whose degree is independent of the dimension of the algebra.  
\begin{coro}
A basic Nakayama algebra $KQ/I$ satisfies $St_{2\lvert Q_0\rvert}$. 
\end{coro}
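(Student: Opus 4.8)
The plan is to deduce this directly from two facts already at hand: the Proposition above, which asserts that every basic Nakayama algebra is locally A-graded, and Theorem~\ref{teo bis}, which guarantees that a locally A-graded algebra with $n$ idempotents satisfies $St_{2n}$. The whole content of the corollary is therefore the identification of the parameter $n$ appearing in Theorem~\ref{teo bis} with the combinatorial invariant $\lvert Q_0\rvert$; once this is settled, the statement follows by a single invocation.

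Concretely, I would first recall that a basic Nakayama algebra is of the form $A\cong FQ/I$ with $Q$ equal to $\overrightarrow{A\,}_n$ or $C_n$ and $I$ an admissible monomial ideal, so that $A$ is locally A-graded with respect to the standard basis it inherits from the paths of $Q$ modulo $I$ (this is exactly the argument of the Proposition, via Corollary~\ref{Cor:QuotientPathLocGraded}). The grading datum making $A$ locally A-graded is the decomposition $A=\bigoplus_{i,j\in Q_0}A^d_{i,j}$ of \eqref{Eq:FQLocAGraded}, so that the size of this locally A-graded structure, and hence the integer $n$ of Theorem~\ref{teo bis}, is $\lvert Q_0\rvert$. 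To confirm this I would note that the idempotents of the standard basis are precisely the classes of the lazy paths $e_i$, $i\in Q_0$: since $I\subseteq\mathfrak{m}^2$ is admissible, these classes remain a complete set of $\lvert Q_0\rvert$ primitive orthogonal idempotents in the quotient, so there are exactly $\lvert Q_0\rvert$ of them.

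Applying Theorem~\ref{teo bis} with $n=\lvert Q_0\rvert$ then yields that $A$ satisfies $St_{2\lvert Q_0\rvert}$, which is the assertion. The only real obstacle is the bookkeeping in the middle paragraph---matching the abstract ``number of idempotents'' of the locally A-graded structure with the concrete count of vertices---and this is routine once one observes that admissibility forces $I$ to leave the lazy-path idempotents untouched; no genuinely new estimate or construction is needed beyond the results already proved.
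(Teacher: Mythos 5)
Your argument is correct and is precisely the one the paper intends: it records this as an immediate consequence of the Proposition that basic Nakayama algebras are locally A-graded together with Theorem~\ref{teo bis}, with the number of idempotents identified with $\lvert Q_0\rvert$ via the lazy paths surviving in the admissible quotient. Your extra bookkeeping on that identification is sound but adds nothing beyond what the paper takes as evident.
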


\begin{ex}
The algebra $FC_n/I$ where $I=\langle\alpha\beta\rangle\neq 0$ can be shown to be isomorphic to a \emph{fundamental algebra} (see \cite{AljadeffGiambrunoPascucciSpinelli}).
\end{ex}

\section{PI-quivers}\label{Sec:PIquivers}
In this section, we classify the quivers whose path algebras are PI. For simplicity, we introduce a name for them:
\begin{defi}
A quiver is called a \emph{PI}-quiver if its path algebra is $PI$.
\end{defi}
The following is our second main result.
\begin{teo}\label{Thm:PIQuivers}
A quiver $Q$ is $PI$ if and only if each vertex of $Q$ lies in at most one oriented cycle.
\end{teo}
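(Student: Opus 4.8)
The plan is to prove the two directions separately. For the "if" direction, suppose every vertex of $Q$ lies in at most one oriented cycle. The strategy is to build a PI directly using the structure theory developed in the previous sections. First I would decompose $FQ$ using the grading \eqref{Eq:FQLocAGraded}, and observe that the condition "every vertex lies in at most one oriented cycle" controls how many distinct paths can share fixed endpoints and length. The key point is that this hypothesis should force $FQ$ (or an appropriate enlargement / closely related algebra containing it) to be locally A-graded, or to embed into a finite product of locally A-graded pieces: along any acyclic portion paths between two vertices are essentially unique, and each oriented cycle contributes a block whose path algebra is $FC_n$, which is locally A-graded by Lemma~\ref{Lem:Cn}. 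Since the "at most one cycle per vertex" condition forbids the configuration of two cycles sharing a vertex (as in Non-example~\ref{Nex:PathAlgNotLocAgr}, where $\dim FQ^d_{i,j}$ can exceed $1$), I expect the local A-gradedness to go through. Theorem~\ref{teo bis} then hands me $\St_{2n}$ as a polynomial identity, so $FQ$ is PI.

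For the "only if" direction, I would argue by contraposition: assume some vertex $v$ lies in two distinct oriented cycles, and show $FQ$ is not PI. The natural approach is to exhibit a subalgebra of $FQ$ isomorphic to (or containing) a free algebra on two generators, since by Non-example~\ref{Nex:FreeAlgebra} the free algebra on $\ge 2$ variables satisfies no identity, and by \eqref{Eq:Subalgebra} a subalgebra of a PI algebra is PI. Concretely, if $v$ lies in two distinct oriented cycles, I would produce two closed paths $c_1, c_2$ based at $v$ that are "independent," i.e. whose distinct words $c_{i_1} c_{i_2} \cdots c_{i_k}$ remain linearly independent in $FQ$. Because $c_1$ and $c_2$ are closed at the same vertex $v$, every such product is again a nonzero closed path at $v$, and distinctness of the two cycles should guarantee that different words give different paths in the standard basis. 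This yields an embedding $F\langle x_1, x_2\rangle \hookrightarrow e_v FQ e_v$, defeating any identity.

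The main obstacle I anticipate is the "only if" direction: carefully choosing $c_1$ and $c_2$ so that the assignment $x_1 \mapsto c_1$, $x_2 \mapsto c_2$ is genuinely injective on the level of standard basis words. Two closed paths at $v$ might interfere—sharing arrows or admitting nontrivial coincidences among their products—so I would likely first reduce to a clean local model. The cleanest reduction is to pass to a minimal offending configuration: two oriented cycles meeting at $v$, and to invoke Lemma~\ref{Lemma:DisconnectingArrows} to understand how arrows outside cycles disconnect things, isolating the "two cycles glued at a vertex" quiver as the essential non-PI witness. On that model the non-PI-ness should reduce to the observation that the two loops in \eqref{Eq:QuiverTwoLoops} (after contracting each cycle to a loop at $v$) generate a free monoid, so their images are linearly independent. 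Making the contraction argument rigorous—showing that distinct words in the two cycles really do produce distinct paths even when the cycles have length greater than one and possibly share vertices other than $v$—is where the careful bookkeeping lies, and I would handle it by tracking the sequence of return visits to $v$ to reconstruct the word uniquely.
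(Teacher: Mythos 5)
Your ``only if'' direction is essentially the paper's argument: when a vertex $v$ lies in two distinct oriented cycles, the two closed paths based at $v$ generate a copy of the free algebra $F\langle x_1,x_2\rangle$ inside $e_vFQe_v$, and \eqref{Eq:Subalgebra} together with Non-example~\ref{Nex:FreeAlgebra} finishes the job. (The paper states the freeness without the bookkeeping you worry about; your idea of reconstructing the word from the sequence of return visits to $v$ is the right way to justify it.)

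The ``if'' direction, however, has a genuine gap: your key claim --- that the hypothesis ``every vertex lies in at most one oriented cycle'' forces $FQ$ to be locally A-graded --- is false, and the paper itself exhibits the counterexample. The quiver in \eqref{Eq:QuiverTwoLoops} (two loops joined by an arrow) satisfies the hypothesis, since each vertex lies in exactly one cycle, yet Non-example~\ref{Nex:PathAlgNotLocAgr} shows its path algebra is not locally A-graded: $\alpha_1\alpha_1\alpha_2\alpha_3$ and $\alpha_1\alpha_2\alpha_3\alpha_3$ are distinct paths with the same endpoints and the same length, so $\dim FQ^d_{i,j}>1$. You cite this non-example but misread it --- it is not an instance of two cycles sharing a vertex; it is precisely a PI quiver whose path algebra fails to be locally A-graded, so Theorem~\ref{teo bis} cannot be applied to $FQ$ directly, and your fallback of embedding $FQ$ into a product of locally A-graded algebras is not constructed and is not obviously available. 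What the paper does instead is keep the Swan-quiver machinery but apply it \emph{per evaluation tuple}: for any $2n$ paths $p_1,\dots,p_{2n}$ with $p_1\cdots p_{2n}\neq 0$ ($n=|Q_0|$), the Swan quiver $\Sigma_\beta$ satisfies \eqref{Eq:SwanQuiver}, Swan's Theorem~\ref{gra1} pairs unicursal paths of opposite sign, and a separate key Lemma~\ref{Lem:KeyLemPiQuiver} (proved by induction, splitting into the case where every arrow of $\Sigma_\beta$ lies in a cycle --- where the tuple does land inside a single $FC_n$, which \emph{is} locally A-graded --- and the case handled by Lemma~\ref{Lemma:DisconnectingArrows}) shows that all unicursal paths between two fixed vertices of $\Sigma_\beta$ represent the \emph{same} element of $FQ$, so the signed terms cancel and $St_{2n}(p_1,\dots,p_{2n})=0$. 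That tuple-by-tuple cancellation lemma is the missing ingredient your plan would need.
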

\begin{proof}
If a quiver $Q$ contains a vertex $v$ which belongs to two oriented cycles $c_1$ and $c_2$, then the subalgebra $B(c_1,c_2)$ of $FQ$ generated by $c_1$ and $c_2$ is isomorphic to the free algebra in two generators. It follows from \eqref{Eq:Subalgebra} that $\id{FQ}\subseteq \id{B(c_1,c_2)}=\id{F\langle x_1,x_2\rangle}=\{0\}$.  It follows that $FQ$ has no polynomial identities.

Let us prove the other implication. Let hence $Q$ be a quiver such that every vertex of $Q$ belongs to at most one oriented cycle. Let $n=\lvert Q_0\rvert$ be number of vertices of $Q$. We prove that 
$St_{2n}\in \id{FQ}$. More precisely, we claim that for any set  $\beta=(p_1,p_2,\cdots, p_{2n})$ of $2n$ paths in $Q$, we have
\begin{equation}\label{Eq:ClaimPI}
St_{2n}(p_1,p_2,\dots, p_{2n})=0.
\end{equation}
If the elements of $\beta$ are not all distinct, then \eqref{Eq:ClaimPI} follows by the fact that $St_{2n}$ is an alternating polynomial. Let us suppose that $\lvert\beta\rvert=2n$. If every product of all the elements of $\beta$ is zero then \eqref{Eq:ClaimPI} holds by definition \eqref{Def:Stn}. We may assume, up to a reordering of $\beta$, that $p_1p_2\dots p_{2n}\neq0$. 
We notice that every non-zero summand of $S_{2n}(p_1,\cdots, p_{2n})$ corresponds to a unicursal path in the Swan quiver $\Sigma_\beta$. 
We notice that the number of vertices of $\Sigma_\beta$ is less or equal the number of vertices of $Q$, which is $n$ and the number of arrows of $\Sigma_\beta$ is $\lvert\beta\rvert=2n$; thus $\Sigma_\beta$ satisfies  \eqref{Eq:SwanQuiver} and we can apply Swan's Theorem~\ref{gra1}.
The following is the key lemma to conclude the proof. 

\begin{lemma}\label{Lem:KeyLemPiQuiver}
Let $Q$ be a quiver in which each vertex lies in at most one oriented cycle. Let $\beta=(p_1,\dots, p_m)$ be a set of paths in $Q$ such that $p_1p_2\cdots p_m\neq 0$. Then given two vertices $i,j\in (\Sigma_\beta)_0$ of the Swan quiver of $\beta$, all unicursal paths in $\Sigma_\beta$ from $i$ to $j$ represent the same path in $Q$. 
\end{lemma}
For convenience of the reader, we postpone the proof of Lemma~\ref{Lem:KeyLemPiQuiver} to section~\ref{Sec:ProofKeyLemma} below and finish the proof of Theorem~\ref{Thm:PIQuivers} as follows: By the key Lemma~\ref{Lem:KeyLemPiQuiver} all those paths represent the same path in $Q$ and thus they cancel each other out in $S_{2n}(p_1,\cdots, p_{2n})$. We conclude that \eqref{Eq:ClaimPI} holds.
\end{proof}

\subsection{Proof of Lemma~\ref{Lem:KeyLemPiQuiver}}\label{Sec:ProofKeyLemma}
We denote by $A_\beta$ the subalgebra of $FQ$ generated by $\beta$. We proceed by induction on $m=\lvert\beta\rvert\ge 1$. If $m=1$, then there is only one unicursal path in $\Sigma_\beta$ and hence there is nothing to prove. Let us assume that $m\ge 2$. There are two possibilities, 
\begin{align}\label{Condition1ProofKeyLemma}
&\textrm{every arrow of }\Sigma_\beta\textrm{ lies in an oriented cycle},\\\label{Condition2ProofKeyLemma}
&\textrm{there exists an arrow }\alpha \textrm{ of }\Sigma_\beta\\\notag &\textrm{that does not lie in an oriented cycle of }\Sigma_\beta. 
\end{align}
If \eqref{Condition1ProofKeyLemma} holds, we claim that there exists an oriented cycle $C\subset Q$ such that $\beta\subset FC$. Indeed, we notice that $\Sigma_\beta$ is strongly path connected (to see this, we notice that there exists a path from $i$ to $j$ or from $j$ to $i$ because $\Sigma_\beta$ has a unicursal path and now we conclude by the fact that every arrow of $\Sigma_\beta$ is part of an oriented cycle). Now,  $p_1$ is part of an oriented cycle of $\Sigma_\beta$ that we call $C(1)$ and thus there exists an oriented cycle $C$ in $Q$ such that every arrow of $C(1)$ is a path of $C$, in particular $p_1\in FC$. If $C(1)=\Sigma_\beta$ we are done. Otherwise, let $p'\in \beta$ but not an arrow of $C(1)$. Then there exists a path $q_1$ from any vertex $v\in (C(1))_0$ to $s(p')$ and another path $q_2$ from $t(p')$ to $v$ and thus an oriented cycle $C(2)=q_1p'q_2$. Since $v$ is both in $C(1)$ and in $C(2)$, $v$ is a vertex of $Q$ lying in two oriented cycles; since there are no vertices in $Q$ lying in two different oriented cycles, we conclude that $C(1)$ and $C(2)$ represent the same cycle $C$ and thus $p'\in FC$. It follows that $\beta\subset FC$. Since, by Lemma~\ref{Lem:Cn},  $FC$ is locally A-graded, then, by Lemma~\ref{Lem:SubalgebraLocAGraded}, $A_\beta\subset FC$ is locally A-graded as well. In particular, since all unicursal paths of $\Sigma_\beta$ have the same length, by fixing a starting and an ending vertex, any unicursal path with those chosen endpoints represents a unique element of $FQ$.

If \eqref{Condition2ProofKeyLemma} holds, then by Lemma~\ref{Lemma:DisconnectingArrows}, there is a decomposition of $\Sigma_\beta\setminus\{\alpha\}=\Sigma(1)\sqcup \Sigma(2)$ into disjoint subquivers. In particular, every unicursal path $\pi$ in $\Sigma_\beta$ can be written in the form  $\gamma_1\alpha\gamma_2$ where $\gamma_1$ is unicursal in $\Sigma(1)$ and $\gamma_2$ is unicursal in $\Sigma(2)$. By induction, every unicursal path in $\Sigma(1)$ starting in $s(\gamma_1)$ and ending in $s(\alpha)$ represents the same element of $FQ$ and every unicursal path in $\Sigma(2)$ starting in $t(\alpha)$ and ending in $t(\gamma_2)$ represents the same element of $FQ$. This concludes the proof.

\begin{ex}
The quiver shown in \eqref{Eq:QuiverTwoLoops} is PI, but its path algebra is not locally A-graded (see Non-example~\ref{Nex:PathAlgNotLocAgr}).
\end{ex}
\begin{nex}
The quiver shown on the right of Figure~\ref{Fig:QuiverSwan} is not PI, because it contains a vertex which lies in two different oriented cycles.
\end{nex}
\begin{rem}
If every vertex of $\Sigma_\beta$ has flow zero, then \eqref{Condition1ProofKeyLemma} holds. The quiver on the right of Figure~\ref{Fig:QuiverSwan} shows that the converse is not true.
\end{rem}

For infinite quivers the situation is different.
\begin{lemma}\label{Lem:Ainfinite}
Let $A_{+\infty}^{right}$ be the infinite quiver with vertex set $\mathbb{N}=(1,2,3,\cdots)$ and  arrow set $(i\rightarrow i+1\mid\, i\in\mathbb{N})$. Then $FA_{+\infty}^{right}$ is not PI.
\end{lemma}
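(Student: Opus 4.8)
The plan is to show that the path algebra $FA_{+\infty}^{right}$ contains, as a subalgebra, an isomorphic copy of $UT_n(F)$ for every $n$, and to leverage this together with the fact that the standard polynomial $St_{2n}$ is \emph{not} an identity for $UT_n(F)$ in order to rule out any fixed polynomial identity. Concretely, for each $n$ consider the full subquiver on the vertices $\{1,2,\dots,n\}$, which is exactly $\overrightarrow{A\,}_n$; by Lemma~\ref{Lem:AnUpperTriangular} the subalgebra of $FA_{+\infty}^{right}$ spanned by the paths supported on these vertices is isomorphic to $UT_n(F)$.

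Next I would argue by contradiction. Suppose $FA_{+\infty}^{right}$ were PI, so that there is a nonzero $f\in\id{FA_{+\infty}^{right}}$; say $f$ has degree $d$. By the inclusion $UT_n(F)\hookrightarrow FA_{+\infty}^{right}$ and property~\eqref{Eq:Subalgebra}, we would have $f\in\id{UT_n(F)}$ for \emph{every} $n$. The point is then to produce, for $n$ large, an explicit element of $\id{UT_n(F)}$-complement that $f$ would be forced to kill but cannot. The cleanest route is to recall that $St_{2n}\notin\id{UT_{n+1}(F)}$ (more generally $UT_m(F)$ does not satisfy $St_{2n}$ once $m$ is large enough): a standard witness is obtained by evaluating the standard polynomial on the matrix units $e_{12},e_{23},\dots$ along a single path, so that exactly one monomial in the expansion survives and the value is a nonzero matrix unit. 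Since a path algebra on the infinite linear quiver contains arbitrarily long such paths, no single standard polynomial $St_{2n}$ can be an identity, and by the Leibniz relation~\eqref{Eq:LeibnizStn} this obstructs the existence of any identity at all via a bound on its degree.

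A more self-contained formulation, which I would prefer to write out, avoids invoking external facts about $\id{UT_n}$ and instead directly exhibits a non-vanishing evaluation: given any nonzero $f$ of degree $d$, I would take $n>d$ and evaluate $f$ on the sequence of arrows $\alpha_1,\alpha_2,\dots$ of the infinite path. Because distinct products of consecutive arrows correspond to distinct paths (the standard basis of $FA_{+\infty}^{right}$ consists of the honest paths $e_i$, $\alpha_i\cdots\alpha_{i+k}$), the monomials of $f$ evaluate to linearly independent basis elements, and no cancellation is possible; hence $f$ cannot vanish identically. This is essentially the same mechanism used in Non-example~\ref{Nex:FreeAlgebra}, adapted to the fact that on a \emph{linear} (non-branching, non-cyclic) infinite quiver there is at most one path between any two vertices, so multiplication of the generators is ``free enough'' in each multilinear component to detect a nonzero polynomial of any prescribed degree.

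The main obstacle is making the non-cancellation argument fully rigorous: one must check that, after reducing $f$ to its multilinear components (legitimate in characteristic zero by Theorem~\ref{char 0 ordinary}, and harmless in general since a nonzero $f$ has a nonzero homogeneous component in some multidegree), the chosen substitution sends the relevant monomials to \emph{distinct} standard-basis paths with no collisions, so that a nonzero coefficient cannot be canceled. The acyclic, non-branching geometry of $A_{+\infty}^{right}$ is exactly what guarantees this: along a strictly increasing sequence of vertices the only surviving products are the ``left-to-right'' orderings, and these are pairwise distinct as paths. Once this bookkeeping is in place, the contradiction with $f\in\id{FA_{+\infty}^{right}}$ is immediate, so $FA_{+\infty}^{right}$ is not PI.
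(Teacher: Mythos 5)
Your argument is correct in substance and begins exactly as the paper does, by embedding $UT_n(F)$ into $FA_{+\infty}^{right}$ for every $n$ via the full subquiver on $\{1,\dots,n\}$ and Lemma~\ref{Lem:AnUpperTriangular}; where you diverge is in how you conclude. The paper's proof is a two-line observation: $FA_{+\infty}^{right}=\bigcup_{n\geq 1}UT_n(F)$ is a directed union, hence $\id{FA_{+\infty}^{right}}=\bigcap_{n\geq1}\id{UT_n(F)}$, and the vanishing of this intersection is quoted from the literature. You instead prove that vanishing directly by the staircase substitution, which makes the lemma self-contained and characteristic-free; that is a genuine gain. Two points need tightening in your write-up. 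First, the claim in your second paragraph that the failure of every $St_{2n}$ together with the Leibniz relation \eqref{Eq:LeibnizStn} ``obstructs the existence of any identity at all'' is a non sequitur: an algebra can satisfy identities none of which are standard polynomials, and \eqref{Eq:LeibnizStn} points the wrong way for this purpose, so you should delete that route and rely on your third paragraph. Second, in the direct argument the phrase ``the monomials of $f$ evaluate to linearly independent basis elements'' is not what happens: under the substitution $x_i\mapsto\alpha_i$ all monomials except one evaluate to \emph{zero}, since $\alpha_{\sigma(1)}\cdots\alpha_{\sigma(d)}\neq 0$ only for $\sigma=\mathrm{id}$. The correct bookkeeping is that each permuted substitution $x_{\tau(i)}\mapsto\alpha_i$ isolates exactly one coefficient of the multilinear component of $f$ and forces it to vanish, and ranging over all $\tau$ kills every coefficient; combined with the standard reduction to multilinear identities (of no greater degree), this yields the contradiction. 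With those repairs your proof is complete and arguably more informative than the paper's.
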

\begin{proof}
The algebra $FA_{+\infty}^{right}$ is the direct limit $FA_{+\infty}^{right}=\cup_{n\geq 1} UT_n(F)$ and $\id{FA_{+\infty}^{right}}= \cap_{n\geq 1} \id{UT_n(F)}=\{0\}$ (see also \cite{Garcia Yasumura}).
\end{proof}
\begin{rem}\label{Rem:Infinite}
In view of Lemma~\ref{Lem:Ainfinite}, Theorem~\ref{Thm:PIQuivers} does not hold for infinite quivers.
\end{rem}
\begin{oq}\label{OQ:Rangaswamy}
Infinite quivers whose Leavitt path algebra has finite GK-dimension are classified in \cite[Theorem~6.4]{Rangaswamy}. In view of the analogy between finite PI quivers and Leavitt path algebras with finite GK-dimension mentioned in the introduction, it is natural to ask if the quivers appearing in loc.cit. are PI.
\end{oq}

\section{Acyclic quivers}\label{Sec:Acyclic}

In this section we find generators for the T-ideal of finite dimensional path algebras, i.e. path algebras of acyclic quivers. Notice that among the class of PI algebras the subclass for which is known a description of the generators of the T-ideal is quite small. For example, the set of generators of the T-ideal of $M_n(F)$ is not known unless  $n=2$ in which case it is described in \cite{Drensky1990}. The following is our third main result.
(See \eqref{Eq:un} for the definition of $u_n$.)
\begin{thm}\label{Thm:AcyclicQuivers}
Let $F$ be a field of characteristic zero and let $Q$ be an acyclic quiver. Then 
\begin{equation}\label{Eq:AcyclicTIdeal}
\id{FQ}=\langle u_m \rangle_T
\end{equation}
where $m-1$ is the maximal length of a path in $Q$.
\end{thm}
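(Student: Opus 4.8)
The plan is to prove the equality of T-ideals $\id{FQ}=\langle u_m\rangle_T$ in two steps: first the inclusion $\langle u_m\rangle_T\subseteq\id{FQ}$, then the reverse inclusion $\id{FQ}\subseteq\langle u_m\rangle_T$. For the easy inclusion, I would show directly that $u_m$ is a polynomial identity for $FQ$, where $m-1$ is the maximal path length. The key observation is that $FQ$ is graded by path length (see \eqref{Eq:FQLocAGraded}), so $FQ^d=0$ for $d\ge m$. Since the commutator $[x,y]=xy-yx$ of two homogeneous elements of positive degree has degree $\ge\max$ of their degrees, and more precisely every commutator $[p,q]$ of paths lands in strictly positive degree (as $[e_v,e_w]=0$ and the diagonal part is handled separately), a product $[x_1,x_2]\cdots[x_{2m-1},x_{2m}]$ of $m$ commutators should land in degree $\ge m$ when evaluated, hence vanish. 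I would make this precise by decomposing each evaluated variable into its homogeneous components and checking that each commutator raises the total degree by at least one, so that $m$ commutators force degree $\ge m > m-1$.

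For the hard inclusion $\id{FQ}\subseteq\langle u_m\rangle_T$, the natural strategy is to reduce to Mal'cev's Theorem~\ref{Thm:Malcev}. The crucial structural input is that $FQ$ embeds into $UT_N(F)$ for a suitable $N$: since $Q$ is acyclic, $FQ$ is finite dimensional and its Jacobson radical is the ideal $\mathfrak m$ generated by the arrows, which is nilpotent of index exactly $m$ (the longest path has length $m-1$). One can realize $FQ$ as a subalgebra of upper triangular matrices by ordering the vertices topologically (possible since $Q$ is acyclic) so that every arrow goes from a lower to a higher index. This yields $FQ\hookrightarrow UT_m(F)$ in a way that respects the filtration by path length, because a nonzero product of $k$ arrows sits $k$ steps above the diagonal. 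By \eqref{Eq:Subalgebra} this gives $\id{UT_m(F)}\subseteq\id{FQ}$, which combined with Theorem~\ref{Thm:Malcev} gives $\langle u_m\rangle_T\subseteq\id{FQ}$ again — but I want the opposite containment, so the embedding must instead be used to show $FQ$ and $UT_m(F)$ have the \emph{same} T-ideal, or at least that $\id{FQ}\subseteq\langle u_m\rangle_T$.

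The cleanest route to the reverse inclusion is a counting/multilinearity argument in characteristic zero. By Theorem~\ref{char 0 ordinary}, $\id{FQ}$ is generated as a T-ideal by its multilinear elements, so it suffices to show every multilinear identity of $FQ$ lies in $\langle u_m\rangle_T$. A multilinear polynomial of degree $d$ is a linear combination of the monomials $x_{\sigma(1)}\cdots x_{\sigma(d)}$, and modulo $\langle u_m\rangle_T$ one can use the standard fact (from Mal'cev's analysis) that every multilinear polynomial is congruent to one in which each monomial is a product of at most $m-1$ ``blocks'' of increasing-index variables separated by commutators; the generator $u_m$ kills everything with $m$ or more commutator blocks. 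I would then argue that a multilinear $f$ vanishing on $FQ$ must already vanish on $UT_m(F)$ after this reduction — here the nilpotency index $m$ of the radical is exactly what matches the number of commutator factors in $u_m$. The \textbf{main obstacle} I anticipate is precisely this last matching: one must verify that the maximal path length $m-1$ in $Q$ forces the \emph{same} bound on the number of commutator blocks as the upper-triangular size in Mal'cev's theorem, i.e. that $FQ$ does not satisfy any identity strictly stronger than $u_m$. This requires exhibiting, for a path of maximal length $m-1$, explicit evaluations showing $u_{m-1}$ is \emph{not} an identity of $FQ$, thereby pinning down $m$ exactly and closing the gap between the two inclusions.
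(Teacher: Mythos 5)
Your argument for the inclusion $\langle u_m\rangle_T\subseteq\id{FQ}$ is correct and is essentially the paper's: by multilinearity one reduces to homogeneous (or basis) evaluations, a non-zero commutator of two homogeneous elements has length-degree at least $1$ because the degree-zero part $\bigoplus_{v}Fe_v$ is commutative, and a product of $m$ such commutators lands in degree $\ge m$, which is zero since the maximal path length is $m-1$.

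The reverse inclusion $\id{FQ}\subseteq\langle u_m\rangle_T$ is where your proposal has a genuine gap, and you in fact diagnose it yourself: embedding $FQ$ into $UT_N(F)$ only yields $\id{UT_N(F)}\subseteq\id{FQ}$, i.e.\ the inclusion you already have. The missing idea is that the embedding must go the \emph{other} way. A path of maximal length $m-1$ has $m$ pairwise distinct vertices because $Q$ is acyclic, so its support is a subquiver isomorphic to $\overrightarrow{A\,}_m$, and the span of the paths of this subquiver is a subalgebra $F\overrightarrow{A\,}_m\cong UT_m(F)$ of $FQ$ (Lemma~\ref{Lem:AnUpperTriangular}). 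By \eqref{Eq:Subalgebra} this gives $\id{FQ}\subseteq\id{UT_m(F)}$, and Mal'cev's Theorem~\ref{Thm:Malcev} identifies the right-hand side with $\langle u_m\rangle_T$. Your proposed substitute --- reducing a multilinear identity modulo $\langle u_m\rangle_T$ to few commutator blocks and then "arguing it must vanish on $UT_m(F)$" --- is precisely the unproved inclusion $\id{FQ}\subseteq\id{UT_m(F)}$ restated, and your closing suggestion to exhibit evaluations showing $u_{m-1}\notin\id{FQ}$ would only show that the degree $2m$ is optimal, not that $\id{FQ}$ equals $\langle u_m\rangle_T$: a priori $FQ$ could satisfy identities outside $\langle u_m\rangle_T$ that are not of the form $u_{m-1}$. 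With the subalgebra $UT_m(F)\hookrightarrow FQ$ in hand, none of that machinery is needed and the proof closes in one line.
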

\begin{proof}
A path of maximal length in $Q$ gives an embedding of $\overrightarrow{A\,}_{m}$ in $Q$, because $Q$ is acyclic and hence the vertices of this path are all distinct. Hence $F\overrightarrow{A\,}_{m}\subseteq FQ$ and, by \eqref{Eq:Subalgebra}, $\id{FQ} \subseteq \id{F\overrightarrow{A\,}_{m}}$. By Lemma~\ref{Lem:AnUpperTriangular} and Theorem~\ref{Thm:Malcev}, we get $\id{FQ}\subseteq\langle u_m\rangle_T$.

To prove the other inclusion, we notice that, since $Q$ is acyclic, for any paths $\gamma_1$ and $\gamma_2$ in $Q$, the element $[\gamma_1, \gamma_2]$ is either zero or a single path; if it is a non-zero path, it has length $l(\gamma_1)+l(\gamma_2)\ge 1$, because $[e_i, e_j]=0$ for any $i,j\in Q_0$. Thus, a non trivial evaluation of $u_m$ will produce a concatenation of $m$ paths each of length at least 1, which is zero by hypothesis. Therefore $u_m\in \id{FQ}$ and hence $\langle u_m\rangle_T\subseteq \id{FQ}$.
\end{proof}
\begin{rem}
Let $A=FQ/I$ be a finite dimensional monomial quiver algebra and let $m-1$ be the maximal length of an element of its standard basis. If  $F\overrightarrow{A\,}_m\subseteq A$ then the same argument as above proves that
\(\id{A}= \langle u_m\rangle_T.\)
The embedding $F\overrightarrow{A\,}_m\subseteq A$ is necessary to have this equality, as the next Non-example shows.
\end{rem}
\begin{nex}
Let us consider the monomial quiver algebra \(FQ/I\) where 
\(
Q=\begin{tikzpicture}[>=latex, baseline]
\node (1) at (0,0) {$\bullet$};
\node (2) at (1,0) {$\bullet$};
\draw[->] (1) to node[above] {$\alpha$} (2);
\draw[->] (2) to [out=20, in=-20, loop] node[right] {$\beta$} (2);
\end{tikzpicture}
\)
and \(
I=\langle \beta^2\rangle\). We notice that the longest standard basis element of $A$ is $\alpha\beta$ but $F\overrightarrow{A}_3\not\subset A$. In this case, it is still true that $u_3\in\id{A}$ but it is not of minimal degree. Indeed, $u_2\in \id{A}$ as well. Furthermore, $\langle u_3\rangle_T\subsetneq  \langle u_2\rangle= \id{A}$. 
\end{nex}

\section{A glimpse into quiver algebras}\label{Sec:PIQuiverRelations}

In this section, we take a first look at the behavior of quotients of non-PI quivers. The simplest examples of such quivers arise by gluing two oriented cycles at a common vertex.  We denote by $C_n\star_v C_m$ the quiver obtained by gluing $C_n$ and $C_m$ at a vertex $v$ (see figure~\ref{Fig:CnCm} for the case $n=3$ and $m=4$)
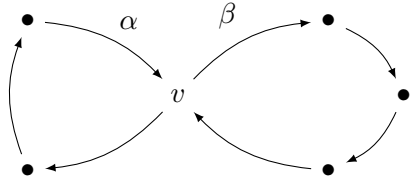
\begin{figure}
\begin{tikzpicture}[>=latex]
\node (-1) at (-2,-1) {$\bullet$};
\node (-2) at (-2,1) {$\bullet$};
\node (1) at (0,0) {$v$};
\node (2) at (2,1) {$\bullet$};
\node (3) at (3,0) {$\bullet$};
\node (4) at (2,-1) {$\bullet$};
\draw[->] (1) to[bend left=20]  (-1);
\draw[->] (-1) to[bend left=20](-2);
\draw[->] (-2) to[bend left=20] node[above right] {$\alpha$}(1);
\draw[->] (1) to[bend left=20]  node [above left] {$\beta$}(2);
\draw[->] (2) to[bend left=20]  (3);
\draw[->] (3) to[bend left=20]  (4);
\draw[->] (4) to[bend left=20]  (1);
\end{tikzpicture}
\caption{The quiver $C_3\star_v C_4$}\label{Fig:CnCm}
\end{figure}
\begin{thm}\label{Thm:PiRelation}
Let $Q$ be a quiver given by the gluing of two oriented cycles $C(1)$ and $C(2)$ at a vertex $v$. Let $\alpha$ be the arrow of $C(1)$ ending in $v$ and let $\beta$ be the arrow of $C(2)$ starting from $v$. The monomial quiver algebra $A=FQ/\langle\alpha\beta\rangle$ is PI.   More precisely, if $C(1)=C_n$ and $C(2)=C_m$ and  $k=\max\{n,m\}$, then $St_{2k}^{(1)}St_{2k}^{(2)}$ is a polynomial identity for $A$.
\end{thm}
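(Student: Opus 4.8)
The plan is to exploit the relation $\alpha\beta=0$ to produce a two-sided ideal $J\subseteq A$ with $J^2=0$ whose quotient $A/J$ satisfies the single standard identity $St_{2k}$; the product identity will then follow formally. First I would analyse which paths of $Q$ survive as a basis of the monomial algebra $A$, namely those not containing $\alpha\beta$ as a subword. Since $C(1)$ and $C(2)$ share only the vertex $v$, every arrow belongs to exactly one of the two cycles, and a change of cycle inside a path can occur only at $v$. The only arrow of $C(1)$ into $v$ is $\alpha$ and the only arrow of $C(2)$ out of $v$ is $\beta$, so the relation $\alpha\beta=0$ forbids passing from $C(1)$ to $C(2)$, whereas the transition from $C(2)$ to $C(1)$ (the last arrow of $C(2)$ followed by the first arrow of $C(1)$) remains allowed. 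Consequently every nonzero basis path of $A$ is of exactly one of three kinds: entirely inside $C(1)$, entirely inside $C(2)$, or a path that first runs through $C(2)$ and then, after crossing $v$ once, runs through $C(1)$.

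Next I would let $J\subseteq A$ be the span of the paths of the third kind. The one-way transition structure shows that $J$ is a two-sided ideal: multiplying such a path on either side by any basis path either yields another third-kind path or forces a forbidden $C(1)\to C(2)$ transition and hence vanishes. The same observation gives $J^2=0$, since concatenating two third-kind paths would require passing from the terminal $C(1)$-part of the first into the initial $C(2)$-part of the second, i.e. a $C(1)\to C(2)$ transition, which is zero. I would then check that the assignment sending a $C(1)$-path $p$ to $(p,0)$, a $C(2)$-path $q$ to $(0,q)$, and the shared idempotent $e_v$ to $(e_v,e_v)$ defines an injective algebra homomorphism $A/J\hookrightarrow FC_n\times FC_m$: all cross products between the two cycles vanish in $A/J$ (the $C(1)\to C(2)$ products already vanish in $A$, and the $C(2)\to C(1)$ products land in $J$), which matches the componentwise multiplication on the right.

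From here the conclusion is quick. By Corollary~\ref{Cor:FCn} the algebra $FC_n$ satisfies $St_{2n}$ and $FC_m$ satisfies $St_{2m}$; since $k=\max\{n,m\}$, the Leibniz relation \eqref{Eq:LeibnizStn} gives $St_{2k}\in\langle St_{2n}\rangle_T\subseteq\id{FC_n}$ and likewise $St_{2k}\in\id{FC_m}$. Evaluating $St_{2k}$ componentwise shows it vanishes on $FC_n\times FC_m$, so by \eqref{Eq:Subalgebra} and the embedding above it vanishes on $A/J$. Writing $\pi\colon A\to A/J$ for the quotient and taking $St_{2k}^{(1)}$, $St_{2k}^{(2)}$ to be standard polynomials in two disjoint sets of $2k$ variables, for any $a_1,\dots,a_{4k}\in A$ both $St_{2k}(a_1,\dots,a_{2k})$ and $St_{2k}(a_{2k+1},\dots,a_{4k})$ are killed by $\pi$ and hence lie in $J$; their product is the evaluation of $St_{2k}^{(1)}St_{2k}^{(2)}$ and lies in $J^2=0$, which is exactly the asserted identity.

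I expect the main obstacle to be the combinatorial bookkeeping of the first two paragraphs: classifying the surviving paths correctly and verifying rigorously that $J$ is an ideal with $J^2=0$ and that $A/J$ embeds into the product. Once the asymmetry of the admissible transitions at $v$ is pinned down, the remainder is the standard principle that a standard identity on $A/J$ together with $J^2=0$ forces a product of two standard polynomials to be an identity of $A$.
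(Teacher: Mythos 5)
Your proof is correct, but it takes a genuinely different route from the paper's. The paper argues directly on evaluations: since $St_{2k}^{(1)}St_{2k}^{(2)}$ is multilinear it suffices to substitute $4k$ standard basis paths of $A$, and after reordering so that the full product is nonzero, the location of the first index whose path terminates in $C(1)$ forces all the entries of one of the two blocks to lie in a single cycle algebra $FC(i)$; that factor then vanishes by Corollary~\ref{Cor:FCn} together with \eqref{Eq:LeibnizStn}. You instead extract the structural reason behind this: the relation $\alpha\beta=0$ makes the span $J$ of the ``crossing'' paths (those with a nonempty $C(2)$-part followed by a nonempty $C(1)$-part) a two-sided ideal with $J^2=0$, and $A/J$ embeds into $FC_n\times FC_m$, so $St_{2k}\in\id{A/J}$ and the product of two disjoint-variable copies lands in $J^2=0$. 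I checked the combinatorics you flagged as the main obstacle: the classification of surviving basis paths, the ideal property of $J$, the vanishing of $J^2$, and the embedding of $A/J$ all hold, because every cycle change must occur at $v$ and the only $C(1)\!\to\!C(2)$ transition is the forbidden word $\alpha\beta$. Your approach costs some extra bookkeeping but buys more: it shows that for \emph{any} $g\in\id{FC_n\times FC_m}$ the product $g^{(1)}g^{(2)}$ in disjoint variables lies in $\id{A}$, whereas the paper's argument is tailored to the standard polynomial. Both proofs ultimately rest on Corollary~\ref{Cor:FCn} and \eqref{Eq:LeibnizStn}.
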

\begin{proof} 
Let us use the shorthand
\[
f(x_1,\dots, x_{4k})\coloneq St_{2k}(x_1,\cdots, x_{2k})St_{2k}(x_{2k+1},\cdots, x_{4k}).
\]
We need to prove that $f\in\id{A}$. Since $f$ is multilinear  in $4k$ variables, it is enough to evaluate $f$ on $4k$ standard basis elements of $A$. By definition, these are either paths of $C(1)$, or paths of $C(2)$ or of the form $\gamma_2\gamma_1$, where $\gamma_i\in FC(i)$, for $i=1,2$. Let, thus, $\mathcal{S}=(p_1,\cdots, p_{4k})$ be a tuple of $4k$ such elements. If every product of all the elements of $\mathcal{S}$ is zero, then $f(p_1,\dots, p_{4k})$ is zero. Up to a reordering of $\mathcal{S}$, we can hence assume that $p_1p_2\cdots p_{4k}\neq0$.  If $\mathcal{S}\subset FC(2)$ then $St_{2k}(p_1,\cdots, p_{2k})=0$ by Corollary~\ref{Cor:FCn} applied to $C(2)$ and \eqref{Eq:LeibnizStn}. Otherwise, let $\ell$ be the first index such that $t(p_\ell)\in C(1)$. If $\ell\le 2k$ then $p_s\in FC(1)$ for every $s> \ell$, and,  again by Corollary~\ref{Cor:FCn} applied to $C(1)$ and \eqref{Eq:LeibnizStn}, $St_{2k}(p_{2k+1},\cdots, p_{4k})=0$. If $\ell> 2k$ then similarly $St_{2k}(p_{1},\cdots, p_{2k})=0$.  
\end{proof}

\begin{rem}
The quiver algebra considered in Theorem~\ref{Thm:PiRelation} is not finite dimensional.
\end{rem}


\begin{thebibliography}{99}\label{bib}
\bibitem{AljadeffGiambrunoPascucciSpinelli} E. Aljadeff, A. Giambruno, E. Pascucci, E. Spinelli,  \textit{On fundamental superalgebras in PI Theory}, Israel J. Math., in press.

\bibitem{ARS} M.~Auslander, I.~Reiten, S.~Smalø, \textit{Representation theory of Artin algebras}. Cambridge studies in advanced mathematics \textbf{36} (1995). Cambridge university press.

\bibitem{AGPRmono} E. Aljadeff, A. Giambruno, C. Procesi, A. Regev, \textit{Rings with Polynomial Identities and Finite Dimensional Representations of Algebras}, Colloquium Publication, vol {\bf  66}  AMS, Providence, RI, 2020.

\bibitem{AAJZelmanov} A.~Alahmadi, H.~Alsulami, S.~K.~Jain, E.~Zelmanov, \textit{Leavitt path algebras of finite Gelfand-Kirillov dimension}, J. Algebra Appl. \textbf{11} (2012).

\bibitem{AmitsurLevitski1940} S. A. Amitsur, J. Levitzki, \textit{Minimal identities for algebras}, Proc. Amer. Math. Soc., \textbf{1} (1950), 449--463.

\bibitem{Pere} P.~Ara, K.~M.~Rangaswamy, \textit{Finitely presented simple modules over Leavitt path algebras}, J. Algebra \textbf{417} (2014), 333--352

\bibitem{Arnold} V.~I.~Arnold, \textit{A-graded algebras and continued fractions}, Comm. Pure Applied Math. \textbf{42} (1989), 993-1000.

\bibitem{ASS} I. Assem, D. Simson, A. Skowroński, \textit{Elements of the Representation Theory of Associative Algebras, Vol. 1} LMS \textbf{65} (2006). Cambridge University Press.

\bibitem{BLR} J.~P.~Bell, T.~H.~Lenagan, K.~M.~Rangaswamy, \textit{Leavitt path algebras satisfying a polynomial identity}, J. Algebra Appl. \textbf{15} (2016), no. 5.

\bibitem{CB} W.~Crawley-Boevey, \textit{Lectures on Representations of Quivers}. Available on the author's webpage.

\bibitem{Drensky1990} V. Drensky, \emph{Polynomial identities for 2×2 matrices}, Acta Appl Math \textbf{21} (1990), 137–161.

\bibitem{Garcia Yasumura} M. S. Garcia, F. Y. Yasumura,
\textit{Graded polynomial identities of the infinite-dimensional upper triangular matrices over an arbitrary field}, J. of Algebra, \textbf{667} (2025).


\bibitem{GZb} A. Giambruno, M. Zaicev, \textit{Polynomial identities and asymptotic methods}, Mathematical Surveys and Monographs \textbf{122} (2005), AMS.


\bibitem{Malcev} Y. N. Mal'tsev, \textit{Basis for identities of the algebra of upper triangular matrices}, Algebr Logic \textbf{10} (1971), no. 4, 242--247

\bibitem{Rangaswamy} K.~M.~Rangaswamy, \textit{Leavitt path algebras with finitely presented
irreducible representations}, J. Algebra \textbf{447} (2016), 624-648.


\bibitem{Ringel} C.~M.~~Ringel, \textit{Tame algebras and integral quadratic forms}. Lecture Notes in Mathematics \textbf{1099} (1974). Springer. 

\bibitem{Schiffler} R. Schiffler, \emph{Quiver representations}, CMS Books in Mathematics/Ouvrages de Mathématiques
Springer, Cham, 2014.


\bibitem{Stanley} R.~Stanley, \emph{Algebraic combinatorics}, UTM. Springer.

\bibitem{Swan} Richard G. Swan. \textit{An application of graph theory to algebra} Proc. Amer. Math. Soc. 14 (1963), 367-373

\end{thebibliography}
\end{document}